\documentclass[10pt]{amsart}   
\usepackage[utf8]{inputenc}
\usepackage[english]{babel}
\usepackage{mathrsfs}
\usepackage{array}
\usepackage{amssymb, amscd, amsmath, amsthm, latexsym, enumerate,amsfonts}
\usepackage{mathtools}
\usepackage{graphicx}
\usepackage[font=small,labelfont=bf]{caption}
\usepackage{float}
\usepackage{tikz}
\usetikzlibrary{matrix,shapes,arrows,positioning,chains, calc, decorations.pathmorphing}
\usepackage{dsfont}
\usetikzlibrary{matrix}
\usepackage{tikz-cd}
\usepackage{xfrac}
\usepackage{indentfirst}
\usepackage{hyperref, bookmark}
\usepackage{cleveref}
\usepackage{xcolor}
\usepackage{MnSymbol}
\usepackage{wrapfig}
\usepackage{faktor}
\usepackage{relsize}

\graphicspath{{Figs/}}

{\left\lbrace\begin{array}{@{}l@{}}}%
{\end{array}\right.}

\DeclareMathOperator{\Id}{Id}
\DeclareMathOperator{\id}{Id}

\newcommand{\R}{\mathbb{R}}

\newcommand{\Z}{\mathbb{Z}}
\newcommand{\C}{\mathbb{C}}

\makeatletter
\newcommand{\newreptheorem}[2]{\newtheorem*{rep@#1}{\rep@title}\newenvironment{rep#1}[1]{\def\rep@title{#2 \ref*{##1}}\begin{rep@#1}}{\end{rep@#1}}}
\newcommand{\littletaller}{\mathchoice{\vphantom{\big|}}{}{}{}}
\makeatother

\theoremstyle{plain}
\newtheorem{theorem}{Theorem}[section]
\newreptheorem{theorem}{Theorem}

\newtheorem{lemma}[theorem]{Lemma}
\newtheorem{prop}[theorem]{Proposition}
\newtheorem{cor}[theorem]{Corollary}
\theoremstyle{definition}
\newtheorem{defi}[theorem]{Definition}

\newtheoremstyle{remark-italic}
  {10pt}
  {10pt}
  {\itshape}
  {}
  {\bfseries}
  {.}
  { }
  {}

\theoremstyle{remark-italic}

\newtheorem{rem}[theorem]{Remark}
\newtheorem{notation}[theorem]{Notation}

\usepackage{slashed}
\newcommand\restr[2]{{
  \left.\kern-\nulldelimiterspace 
  #1 
  \littletaller 
  \right|_{#2} 
  }}

\newcommand{\bigslant}[2]{{\raisebox{.2em}{$#1$}\left/\raisebox{-.2em}{$#2$}\right.}}

\begin{document}
\title{On sections of Lefschetz fibrations and bundles over 2-complexes}

\author{Jonathan A. Hillman }
\address{School of Mathematics and Statistics,  University of Sydney\\
     NSW 2006, Australia}
\email{jonathanhillman47@gmail.com}
\author{Riccardo Pedrotti}
\address{University of Massachusetts, Amherst, MA 01003, USA}

\email{rpedrotti@umass.edu}

\begin{abstract}
We address the question of existence of sections of fibrations in two settings. First, we show that a bundle with base a finite 2-complex admits a section if and only if the inclusion of the fiber is $\pi_1$-injective and the as\-so\-cia\-ted short exact sequence of fundamental groups splits. Second, for Lefschetz fibrations over the disk we provide a complete algebraic criterion characterizing which loops in the boundary mapping torus extend to continuous or smooth sections over the disk. Finally, we apply our results to achiral Lefschetz fibrations over the sphere obtained by doubling along the vertical boundary, and give a criterion ensuring the existence of at least two homologically distinct sections. 
\end{abstract}

\keywords{fundamental group, section, splitting homomorphism, surface bundle, Lefschetz fibration}

\subjclass{55R10, 57R22, 57K43}

\maketitle
\tableofcontents
\section{Introduction}
Given $p: E \to B$ a fibration, a section is a map $s: B \to E$ such that
\[ p\circ s = \Id_B : B \to B.\]
For bundles over $2$-complexes, the existence of a section is essentially governed by homotopy theory as a consequence of the Homotopy Lifting Property (HLP). This is made precise in our first result.

\begin{reptheorem}{thm existence of section fiber bundle if monomorphism plus split exact sequence}
Let $p:E\to{B}$ be a fibre bundle with base $B$ a finite $2$-complex, and let $\ast_B$ and $\ast_E$ be base points such that $p(\ast_E)=\ast_B$. Then there is a map $s:B\to{E}$ such that $ps=id_B$ if and only if the inclusion of the fibre $F=p^{-1}(\ast_B)$ into $E$ induces a monomorphism from $\pi_1F$ to $\pi_1 E$ and there is a homomorphism $\mathfrak{s}:\pi_1B\to\pi_1E$ such that $p_*\mathfrak{s}=id_B$. If so, we may assume that $s_*=\mathfrak{s}$.
\end{reptheorem}

For Lefschetz fibrations, the question of the existence of a section is more ge\-o\-me\-tric in nature, since homotopy theory alone does not fully capture the topological structure of the spaces involved. At the time of writing, it is still unknown whether any (closed) Lefschetz fibration over the sphere admits a section (\cite[Problem 2.22]{BKR26}). A potential approach to solve this problem is the following:
\begin{enumerate}
    \item We divide the base sphere into two disks, $D^{\text{crit}} \cup_{\partial} D^{\text{triv}}$, one containing all the critical values of $\pi$ (the \textit{critical disk}) and the other not (the \textit{trivial disk}). They share a common boundary identified with the equator.
    \item After choosing a trivialization
\begin{equation*}\label{eq trivialisation for LF fibration}
    \Psi: \restr{E}{D^{\text{triv}}}\cong \Sigma \times D^{\text{triv}},
\end{equation*}  
there is no obstruction to finding a section over $D^{\text{triv}}$ as a preimage
\[ s^{\text{triv}}:= \Psi^{-1}\big(\{\ast \}\times D^{\text{triv}}\big)\]
\item We determine whether the loop $\restr{s^{\text{triv}}}{\partial}$ can be extended as a section over $D^{\text{crit}}$.
\end{enumerate}
In the second part of our paper, we completely solve step $(3)$, by fully characterizing the family of loops that can be extended to continuous or smooth sections over $D^{\text{crit}}$ (Thm. \ref{theorem criterion existence section LF over disk with several thimbles}).\\
Our algebraic criterion reads as follows. Let $\pi: E\to D$ be a Lefschetz fibration over the disk with $k$ critical values $z_1,\dots,z_k$, regular fiber a smooth closed orientable surface $\Sigma$ and monodromy $\tau \in \mathrm{Aut}(\Sigma)$. 
\begin{reptheorem}{theorem criterion existence section LF over disk with several thimbles}
    Let  $\pi: E\to D$ be a Lefschetz fibration as above. Let $\Sigma_{\tau}$ be the mapping torus of $\tau \in \mathrm{Aut}(\Sigma)$ identified with the boundary of $E$. Let $([\alpha],1)\in \pi_1(\Sigma_{\tau},\ast_{\Sigma})$ and let $s_{\alpha}$ be a section representative for it.\\ There exists a continuous {section} $\sigma_{\alpha}$ of $E$ extending $s_{\alpha}$ if and only if the following condition is met:\\
    choose a compatible factorization of the monodromy $\tau$:
    \[\tau = \tau_{V_k}\circ \cdots \circ \tau_{V_1}.\]
    Then, as a based loop in $\Sigma$, $\alpha$ can be decomposed as
    \[
    \alpha \sim \alpha_1 \star \tau_{V_1}^{-1}\alpha_2 \star \cdots \star (\tau_{V_{k-1}}\circ \cdots \circ \tau_{V_1})^{-1}\alpha_k,
    \]
    where each $\alpha_i$ is ``twisted-conjugated’’ to some multiple of a based representative $\widetilde{V_i}$ of the $i$-th vanishing cycle $V_i$. I.e. there exist an integer $m_i$ and a based loop $\zeta_i$ in $\Sigma$ such that
    \[
    \alpha_i \sim \zeta_i \star \widetilde{V_i}^{m_i} \star \tau_{V_i}^{-1}(\overline{\zeta_i}).
    \]
    Moreover, the section $\sigma_{\alpha}$ is smoothable if and only if $\forall i \ m_i \in \{0,1\}$.
\end{reptheorem}

We remark that a compatible positive factorization of the monodromy, arising for example after choosing an ordered set of vanishing paths, is an auxiliary choice in our theorem. If the loop $\alpha$ can be successfully decomposed with respect to one positive factorization, then it can be decomposed for any other factorization obtained via Hurwitz moves. Therefore, the property of being \textit{extendable} is a property of $\alpha$, and not of the specific choice of vanishing paths (see Rem. \ref{rem theorem independent on choice of vanishing paths}).\\

In \cite[Theorem 1.1]{Gom25}, the author constructed an example of an achiral Lefschetz fibration whose fiber $\Sigma$ has arbitrarily high genus, as a double of a Lefschetz fibration $f_0: X_0\to S^2$ with \textit{horizontal} boundary $\partial X=\sqcup S^3$ and such that $f_0$ does not admit any sections. The argument relies primarily on observing that $f_0$ does not admit a section for homology reasons: the (relative) fundamental class of the fiber is not primitive in $H_2(X_0)$ and the restriction $f_0: \partial X_0 \to S^2$ is a union of several Hopf fibrations. By doubling $X_0$, the author obtains an example of a (closed) \textit{achiral} Lefschetz fibration $D(X_0)\to S^2$ with no section.\\ In this paper we start with a Lefschetz fibration over the disk $E\to D$ and regular (closed) fiber $\Sigma$ and we double along its \textit{vertical} boundary. We obtain an achiral Lefschetz fibration $D(E)\to S^2$ with many sections since both $E$ and the gluing locus (a mapping torus), poses no homological obstructions. Our last result in this paper is the following criterion for these sections to be homologically essential. 
\begin{reptheorem}{theorem sections of doubles of LF over disk}
If there exists an index $j\in \{1,\dots,k\}$ and an integer $m$  such that $m\cdot [V_j] \notin \mathrm{Im}(\Id_\ast-\tau_*)\subset H_1(\Sigma;\Z)$, then $D(E)$ admits at least two homologically distinct sections.
\end{reptheorem}
\noindent We conclude by remarking that if the total monodromy $\tau$ of $E\to D$ is trivial, then the criterion of Theorem \ref{theorem sections of doubles of LF over disk} is automatically satisfied (see Cor. \ref{cor on theorem section doubles})\\

\noindent\textbf{Acknowledgments.}  We are grateful to {\.I}nan{\c{c}} Baykur for his careful explanations and insightful conversations about Lefschetz fibrations, and to Bob Gompf for several helpful discussions on his recent work.

\section{Sections of bundles over $2$-complexes}
A bundle projection $p:E\to B$ with base $B$ and fibre $F$ aspherical closed surfaces has a section if and only if the associated short exact sequence of fundamental groups splits. This is an immediate consequence of the Homotopy Lifting Property (HLP). For since $E$ is aspherical any splitting homomorphism is realized by a based map $s:B\to E$ such that $ps\simeq id_B$, and $s$ is then homotopic to a section, by the HLP. 

\begin{theorem}\label{thm existence of section fiber bundle if monomorphism plus split exact sequence}
Let $p:E\to{B}$ be a fibre bundle with base $B$ a finite $2$-complex, and let $\ast_B$ and $\ast_E$ be base points such that $p(\ast_E)=\ast_B$. Then there is a map $s:B\to{E}$ such that $ps=id_B$ if and only if the inclusion of the fibre $F=p^{-1}(\ast_B)$ into $E$ induces a monomorphism from $\pi_1F$ to $\pi_1 E$ and there is a homomorphism $\mathfrak{s}:\pi_1B\to\pi_1E$ such that $p_*\mathfrak{s}=id_B$. If so, we may assume that $s_*=\mathfrak{s}$.
\end{theorem}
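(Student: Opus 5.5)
For the easy direction, suppose $s$ is a section with $ps=id_B$; after a homotopy along the fibre we may take $s(\ast_B)=\ast_E$. Then $p_*s_*=id$, so $\mathfrak{s}=s_*$ splits $p_*$. For injectivity of $\pi_1F\to\pi_1E$, I will use the long exact homotopy sequence of the fibration,
\[\pi_2E\xrightarrow{p_*}\pi_2B\xrightarrow{\partial}\pi_1F\to\pi_1E.\]
Since $p_*s_*=id$ on $\pi_2$ as well, $p_*:\pi_2E\to\pi_2B$ is surjective. Hence $\partial=0$ and $\pi_1F\to\pi_1E$ is injective.

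For the converse, I will build $s$ skeleton by skeleton over a cell structure on $B$ with $\ast_B$ a $0$-cell. Over the $1$-skeleton I first pick a maximal tree $T$ and take a section over $T$ through $\ast_E$; this exists because $T$ is contractible, so $p^{-1}(T)\cong F\times T$. Each remaining $1$-cell $e$ then gives a generator $g_e\in\pi_1B$. Choose any lift $\gamma$ of the loop through $\ast_E$ representing $\mathfrak{s}(g_e)$. By the HLP, the path $p\gamma$, which is a loop homotopic to the edge loop of $e$, can be homotoped to that edge loop, and the lift of this homotopy moves $\gamma$ to a genuine section over $e$ (rel the tree part). We may also arrange that the new lifted loop still represents $\mathfrak{s}(g_e)$, since the homotopy is rel basepoint up to the fibre-correction. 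The result is a section $s_1$ over $B^{(1)}$ with $(s_1)_*=\mathfrak{s}$ on the free group $\pi_1B^{(1)}$ composed with the quotient map.

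The main step is extending over a $2$-cell $e$ with attaching map $\phi:S^1\to B^{(1)}$. Pulling back along the characteristic map $D^2\to B$, the bundle is trivial, $\cong D^2\times F$. A section over $\partial D^2$ is then a map $\partial D^2\to F$, and it extends over $D^2$ exactly when it is null-homotopic, i.e. trivial in $\pi_1F$ up to free homotopy. The loop $s_1\circ\phi$ in $E$ represents $\mathfrak{s}([\phi])=\mathfrak{s}(1)=1$ (after connecting to the basepoint by a path in $s_1(B^{(1)})$), so it is null-homotopic in $E$. Under the trivialization, $s_1\circ\phi$ is homotopic inside $p^{-1}(\phi(S^1))$ to its fibre component. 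I expect the delicate point to be making the comparison precisely. I plan to do it as follows. In the pulled-back trivial bundle, the loop $\lambda=(\theta,f(\theta))$ is homotopic, within $\phi^*E\to E$, to $(\ast,f(\theta))$ followed by going around $\partial D^2$ by the flat section. The latter maps to the image of the constant section, which bounds the disk $D^2\times\{\ast\}\to E$. So $[f]\in\pi_1F$ maps to the class of $s_1\phi$ in $\pi_1E$, up to conjugation and basepoint paths, and that class is trivial. Injectivity of $\pi_1F\to\pi_1E$ then gives $[f]=1$, so $f$ extends and the section extends over $e$.

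Doing this for every $2$-cell yields $s$ with $ps=id_B$. Since the $2$-cells do not change $\pi_1$ of the image beyond the quotient, $s_*=\mathfrak{s}$ follows from the $1$-skeleton construction. I expect the main obstacle to be the basepoint bookkeeping in the $1$-skeleton step, ensuring that $s_*$ equals $\mathfrak{s}$ rather than a conjugate or a fibre-twisted version. To handle it, I will correct each edge lift by concatenating with a loop in $F$: the difference between the class of the lifted section loop and $\mathfrak{s}(g_e)$ lies in $\ker p_*=\mathrm{im}(\pi_1F)$, and modifying the section over $e$ by a fibre loop near an endpoint realizes any such element.
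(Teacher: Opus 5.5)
Your proof is correct and essentially the paper's own. Both arguments realize $\mathfrak{s}$ by a section over the $1$-skeleton, then extend over each $2$-cell: the fibre component of the boundary section is trivial in $\pi_1E$, hence trivial in $\pi_1F$ by injectivity; your pull-back along the characteristic map simply replaces the paper's shrinking-disc and Van Kampen bookkeeping.

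One small point: plain HLP does not give a lift that is ``rel the tree part'', but you do not need it. The pull-back of $p$ along the characteristic map of $e$ is trivial over $[0,1]$, so any path in $F$ joining the two prescribed endpoint values already gives a section over $e$ compatible with the tree section. Your fibre-loop correction then adjusts its class to $\mathfrak{s}(g_e)$.
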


\begin{proof}
The necessity of these conditions follows from the exact sequence of homotopy for the bundle If $p$ has a section $s$, so that $ps=id_B$, then $\pi_2(p)$ is an epimorphism, so $\pi_1F\to\pi_1E$ is injective, and $\mathfrak{s}=s_*$ is a splitting homomorphism.\\
Suppose that $\pi_1F\to\pi_1E$ is a monomorphism and $\mathfrak{s}:\pi_1B\to\pi_1E$ is a splitting homomorphism for $p_*$. Let $B^{[1]}$ be the 1-skeleton of $B$, and let $j:B^{[1]}\to{B}$ be the inclusion, Then $B^{[1]}\simeq\vee^rS^1$ for some $r\geq 0$, where each copy of $S^1$ represents a generator $a_i$ of $\pi_1(B)$.\\
For each such generator $a_i$, choose smooth (based) representatives $\widetilde{\gamma}_{a_i}$ of $\mathfrak{s}(a_i)$ in $E$. Since 
        \[ p\widetilde{\gamma}_{a_i}\sim a_i \ \text{ in } B,\] there is a smooth homotopy $H_{a_i}$ such that $H_{a_i}(-,0)=p\widetilde{\gamma}_{a_i}$ and $H_{a_i}(-,1)=a_i$. Bundle projections have the Homotopy Lifting Property \cite[Corollary 2.7.14]{Spa}, so we can solve the following lifting problem:
        \begin{center}
            \begin{tikzcd}
                S^1\times \{0\} \arrow{d} \arrow{r}{\widetilde{\gamma}_{a_i}} & E \arrow{d}{p} \\ S^1\times [0,1] \arrow{r}{H_{a_i}} \arrow[dashed]{ur}{\widetilde{H}_{a_i}}& B
            \end{tikzcd}
        \end{center}
        We have then a new homotopy $\widetilde{H}_{a_i}$ and by setting 
        \[\restr{s}{a_i}(-):=\widetilde{H}_{a_i}(-,1)\] 
        we have the required section over $a_i$. Let $s^{[1]}:B^{{1}}\to E$ be the section over the $1$-skeleton obtained by gluing together over $\ast_B$ the sections overs the single loops. By construction \[f_*\circ s^{[1]}_*=\mathfrak{s}\circ j_*,\] where $f$ is the inclusion of $\restr{E}{B^{[1]}}$ into $E$.\\
We shall induct on the number of 2-cells in $B$. Suppose that $B=B^{[1]}\cup_\chi{D^2}$, where $\chi : D^2\to B$ is the characteristic map of the $2$-cell.\\
Let $D=\tfrac{1}{2}D^2$ be a concentric smaller disc. Then $B^{[1]}$ is a deformation retract of $\overline{B\setminus{D}}$, and we may again use the HLP to extend $s^{[1]}$ over $\overline{B\setminus{D}}$. Thus by replacing $B^{[1]}$ by $\overline{B\setminus{D}}$ we may assume that $\chi$ is injective, and so $B=B^{[1]}\cup{D}$, where  $B^{[1]}\cap{D}\cong S^1$. We may also assume that $\ast\in\partial{D}=B^{[1]}\cap{D}$. Let $E_{[1]}:=p^{-1}(B^{[1]})$ and $E_D:=p^{-1}(D^2)$ be the restrictions of the bundle over $B^{[1]}$and $D$, respectively. Since $D$ is contractible, $E_D\to D$ is trivial and so there is a homeomorphism $h:E_D\cong{F\times{D}}$ such that $\text{proj}_2h=\restr{p}{E_D}$. $\partial{E_{[1]}}$ coincides with the restriction of $E$ over $\partial D$. Let $h_\partial=\restr{h}{\partial{E_{[1]}}}$.\\
Consider the Van Kampen pushout diagram for $E=E_{[1]}\cup_{h_\partial}F\times{D}$:\\
\begin{equation}\label{eq ses fund groups fibration with split}
\begin{CD}
\pi_1\partial{E_{[1]}}@>h_{\partial*}>>\pi_1F\\
@VVV @VVV\\
\pi_1E_{[1]}@>>>\pi_1E.
\end{CD}
\end{equation}
The element $\restr{s}{\partial D}\in\pi_1\partial{E_{[1]}}$ has image $1$ in $\pi_1E$, since $s$ splits the projection of $\pi_1E$ onto $\pi_1B$. Hence its image in $\pi_1F$ must be trivial, since the inclusion of $F$ into $E$ is injective on $\pi_1$. The map $\restr{s^{[1]}}{\partial{D}}:S^1=\partial{D}\to{F}\times{S^1}$ has the form $b\mapsto(k(b),b)$, for all $b\in\partial{D}$, and the Van Kampen argument shows that $k$ is homotopic to a constant map $c$. Let $k_t$ be a homotopy from $k_0=k$ to the constant map $k_1=c$. We may use such a null-homotopy of $k$ to extend the section across a collar neighborhood $N=\partial{D}\times[0,1]$ of $\partial{D}$ in $D$ by setting $s(b,t)=(k_t(b),t,b)$ for all $b\in\partial{D}$ and $0\leq{t}\leq 1$. We then set $s(d)=(c,d)$ for all $d$ in $D\setminus{N}$. This completes the inductive step.\\ The final assertion is clear from the construction of $s$.
\end{proof}

The homomorphism $\pi_1F\to\pi_1E$ is always a monomorphism when $B$ is aspherical. This is also the case if $\chi(F)<0$, for the image of $\pi_2B$ in $\pi_1F$ under the connecting homomorphism  of the long exact sequence of homotopy is central \cite{Go65}. One the other hand, if $\eta:S^3\to{S^2}$ is the Hopf bundle, then the composite with $pr_1:S^3\times{S^1}$ gives  a $T$-bundle over $B=S^2$ which has no section.\\
The following simple example shows that having a section is very much a property of the bundle map, and is not determined by the total space alone. The flat 4-manifold group $\pi$ with holonomy $(\mathbb{Z}/2\mathbb{Z})^2$ and presentation
\[
\langle{a,b,x,y}\mid{aba^{-1}=b^{-1}},~yay^{-1}=ab,~xyx^{-1}=y^{-1}b^{-1},
\]
\[
ax=xa,~bx=xb,~by=yb\rangle
\]
is the group of a $Kb$ bundle ($\pi_1F=\langle{a,b}\rangle$) over $Kb$ ($\pi_1B$ generated by the image of $\{x,y\}$), and as such has no section, since the action $\theta:\pi_1B\to{Out(\pi_1F)}$ does not lift to a homomorphism into $Aut(\pi_1F)$. However it is also the group of a $T$-bundle ($\pi_1F=\langle{b,y}\rangle$) over $T$ ($\pi_1B=\langle{a,x}\rangle$) which does have a section.

\begin{rem}\label{rem alternative proof of nullhomotopy section over boundary 2 cell}
    The crux of the proof of Thm. \ref{thm existence of section fiber bundle if monomorphism plus split exact sequence} is to prove that $\restr{s}{\partial D}$ is null-homotopic in $E_D\cong F\times D$. If $B$ is aspherical, we have the following more concise construction. By \cite[Cor. 2.2.3, Cor 2.2.5]{MP12} \[\pi_1(\chi^*E)\cong \pi_1E \times_{\pi_1B} \pi_1D^2, \] hence it fits in the following diagram:
    \begin{center}
        \begin{tikzcd}
        \pi_1S^1 \arrow[ddr,bend right,"\mathfrak{s}\circ \restr{\chi}{\partial}_*"'] \arrow[drr,bend left,"\text{incl.}_*"] \arrow[dr,dashed] \\
        & \pi_1(\chi^*E) \arrow[r] \arrow[d] & \pi_1D^2 \arrow[d, "\chi_*"] \\
        & \pi_1 E \arrow[r, "p_*"] & \pi_1B 
        \end{tikzcd}
    \end{center}
    The map induced by the universal property coincides with $\restr{s}{\partial D}$ under the isomorphism 
    \[E_D\cong \chi^*E,\] 
    and it is null-homotopic as the two maps inducing it are. Let $\widetilde{h}: D^2\to \chi^*E$ be an extension for $\restr{s}{\partial D}$. We can either proceed as in the proof of Thm. \ref{thm existence of section fiber bundle if monomorphism plus split exact sequence} to prove that $\widetilde{h}$ can be realized as a section or we can apply the Homotopy Extension Lifting Property (HELP) as follows. Since $\pi_2B=0,$ there exists an homotopy of homotopies realizing
        \[ H_{\chi}: p\widetilde{h}\sim \chi \]
     By HELP we can solve the following lifting problem:
        \begin{center}
            \begin{tikzcd}[column sep=3cm]
                D^2\times \{0\} \cup S^1\times [0,1] \arrow{d} \arrow{r}{\widetilde{h}\cup \restr{s}{\partial D}\circ \pi_1} & E \arrow{d}{p} \\ D^2\times [0,1] \arrow{r}{H_{\chi}} \arrow[dashed]{ur}{\widetilde{H}_{\chi}}& B
            \end{tikzcd}    
        \end{center}
    Let $\widetilde{h'}:= \widetilde{H}_{\chi}(-,1) :D^2\to E$, by construction it is a lift of the characteristic map $\chi$ coinciding with $\restr{s}{\partial D}$ over $S^1=\partial D$ and hence a section extending $\restr{s}{\partial D}$ over the whole $D$ which can then be pushed-forward to $E$. 
\end{rem}

In general, criteria for sections of bundles over bases of dimension $>2$ are not determined by fundamental group considerations alone. Left multiplication by the unit quaternions defines a “quaternionic Hopf fibration” from the unit sphere $S^7\subset\mathbb{H}^2$ to the quaternionic projective line $\mathbb{HP}^1\cong S^4$. No such bundle can have a section, since $\pi_4S^7=0$.\\
Good candidates for bundles over $S^3$ with no section are less well known, since $\pi_2(G)=0$ for any Lie group $G$, and so bundles over $S^3$ with structure group $G$ are trivial. Let $U$ be the group of unitary operators in the Hilbert space $\ell^2_\infty(\mathbb{C})$. Then $U$ is weakly contractible \cite{Ku65}, i.e., $\pi_iU=0$ for all $i\geq0$. Let $G=U/S^1$, where $S^1$ is the subgroup of scalar multiplications. Then $G$ is a (connected) topological group, since $S^1$ is a closed central subgroup, and so has a (connected) classifying space $BG$. We have $\pi_iBG\cong\pi_{i-1}G$ for all $i\geq1$ and $\pi_jG
\cong\pi_{j-1}S^1$, for all $j\geq1$. Hence $\pi_3BG\cong\Z$ and $\pi_iBG=0$ if $i\not=3$. Let $q:EG\to{BG}$ be the universal principal $G$-bundle, and let $p:E\to{S^3}$ be the pullback of $q$ over a generator of $\pi_3(BG)$. It is easy to see from the pullback square that the connecting homomorphism $\partial:\pi_3S^3\to\pi_2\Omega{K}$ in the exact sequence of homotopy is an isomorphism. Hence the bundle $p$ has no section. Can this construction be modified to give a fibre bundle over $S^3$ with fibre a compact manifold (perhaps $\mathbb{CP}^k$ for some $k\geq2$) and which has no section?

Let $M=S^1\times{S^2}$.
A. Hatcher has shown that $\mathrm{Diff}(M)\simeq{O(1)}\times{O(2)}\times\Omega{SO(3)}$ \cite{Ha81}.
This suggests the following example of an $M$-bundle over $S^3$ with structure group $G=\Omega SO(3)$.
View $S^1$ as $[-1,1]/\{-1\}\sim\{1\}$ and $S^3$ as $[-1,1]\times{S^2}$
with each of $\{-1\}\times{S^2}$ and $\{1\}\times{S^2}$ collapsed to a point.
Let $\varphi:S^3\to{SO(3)}$ be the double cover and define a function 
$F:S^2\to {\mathrm{Diff}(M)}$ by $F(z)([t],w)=([t],\varphi([t],z)(w))$, 
for all $-1\leq{t}\leq1$ and $w,z\in{S^2}$.
Is the $M$-bundle over $S^3$ with clutching function $F$ the bundle classified
by a map representing a generator of $\pi_3BG\cong\Z$?
Does this bundle have a section?
\section{Lefschetz fibrations over a surface}
\begin{defi}
    Let $X$ be a compact connected oriented smooth $4$-manifold and let $B$ be a connected oriented surface.\textit{ A Lefschetz fibration} on $X$ over $B$ with closed fiber is a smooth map
    \[ \pi : X\to B\]
    such that $\pi^{-1}(\partial B)=\partial X$. If $q\in X$ is a critical point for $\pi$, then $q\in \mathrm{int}X$ and it admits a neighborhood $U$ (called \textit{holomorphic} neighborhood) with complex coordinates $(z_1,z_2)\in \C^2$ compatible with the orientation of $X$ and $\pi(q)$ admits a neighborhood $V$ with complex coordinate $z\in \C$ compatible with the orientation of $B$ such that the restriction
    \[ \restr{\pi}{U} : U \to V\]
    is of the form \[ \pi(z_1,z_2)=z_1^2+z_2^2.\]
    We call $q$ a \textit{positive} critical point.\\
   If the local model for $\pi$ at $q$ is not orientation preserving, we call such a critical point a \textit{negative} critical point. If $\pi$ admits at least one negative critical point we call such map an \textit{achiral} Lefschetz fibration.
\end{defi}
As a consequence of the definition, the set $ X^{\text{crit}}\subset X$ of critical points is finite in both the chiral and achiral case, and so is $B^{\text{crit}}= \pi(X^{\text{crit}})$ the set of critical \textit{values} in $B$. We can always assume that each critical fiber (also called \textit{singular} fiber) contains only one critical point. For details the reader is referred to \cite{GS99}.

\begin{defi}
    A \textit{symplectic} Lefschetz fibration is a Lefschetz fibration equipped with a closed two form $\Omega\in \Omega^2(X)$ such that for each $x\in X$ it is non-degenerate on the vertical tangent space $\ker(d\pi_x)$, and which is a K\"ahler form for the (integrable) almost complex structure supported in some neighborhood of $X^{\text{crit}}$.
\end{defi}

\begin{theorem}\cite{GS99} Given a Lefschetz fibration $\pi: X \to B$ with fiber $F$. If $[F]\neq 0\in H_2(X;\R)$ then $X$ admits the structure of a symplectic Lefschetz fibration. In particular, if the genus of the fiber is at least $2$, $f$ is a symplectic Lefschetz fibration.  
\end{theorem}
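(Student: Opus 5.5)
The statement is the Gompf result: a Lefschetz fibration whose fiber class is nonzero in real homology carries a symplectic form as in the preceding definition. The plan is Thurston's construction for symplectic fibrations, adapted near the critical points.

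\emph{Step 1: a closed form that is positive on the fibers.} Since $[F]\neq 0$ in $H_2(X;\R)$, Poincaré duality (relative if $\partial X\neq\emptyset$) gives a closed $2$-form $\eta$ with $\int_F\eta\neq 0$; rescaling, $\int_F\eta>0$. The difficulty is that $\eta$ is only cohomologically positive on fibers, not pointwise.

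\emph{Step 2: local symplectic forms, and the main obstacle.}
\begin{enumerate}
\item Near each critical point, take $\omega_{\mathrm{std}}$ in the holomorphic chart; it is Kähler for the integrable complex structure.
\item Away from the critical points, cover $B$ by small discs $U_\alpha$ over which $\pi$ is trivial. On each $\pi^{-1}(U_\alpha)$, choose a closed form $\eta_\alpha$ that is positive on fibers and cohomologous to $\eta|_{\pi^{-1}(U_\alpha)}$. This is possible because a fiber area form with the right total area exists and can be pulled back by the trivialization.
\item Over a disc containing a critical value, the local Kähler form must also be matched with $\eta$ in cohomology. The fiber there retracts onto the singular fiber, so one scales and adjusts so that the local form is fiberwise positive and agrees with $\eta$ in $H^2$ of the neighborhood.
\end{enumerate}
I expect this matching near the singular fibers to be the main obstacle. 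I would handle it as in Gompf--Stipsicz: first build a fiberwise symplectic form on the complement of a small ball around the critical point, then interpolate radially using the fact that the standard form is exact on the ball.

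\emph{Step 3: gluing.} On each $\pi^{-1}(U_\alpha)$, write $\eta_\alpha=\eta+d\beta_\alpha$. Take a partition of unity $\{\rho_\alpha\}$ pulled back from $B$ and set
\[\eta'=\eta+d\Big(\sum_\alpha\rho_\alpha\beta_\alpha\Big).\]
The form $\eta'$ is closed. Restricted to a fiber, the terms $d\rho_\alpha\wedge\beta_\alpha$ vanish, since $d\rho_\alpha$ is pulled back from the base. So on each fiber $\eta'$ is a convex combination of the positive forms $\eta_\alpha$, hence positive. Near the critical points only the Kähler piece contributes, so there $\eta'$ is Kähler.

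\emph{Step 4: adding a base term.} Set $\Omega=\eta'+K\pi^*\omega_B$ for an area form $\omega_B$ on $B$ and a large constant $K$. This form is closed and still fiberwise nondegenerate, and is symplectic for $K\gg 0$ by compactness. Near the critical points it remains Kähler, because the pullback of a positive $(1,1)$-form under a holomorphic map is semipositive.

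\emph{The genus $\geq 2$ case.} Here I would show $[F]\neq 0$. If $[F]=0$, then $X$ would admit no fiberwise-positive closed form at all. The fiber is an essential surface and the vanishing cycles are nonseparating or otherwise controlled, so an argument via the monodromy shows that a section-like class or an $\eta$ integrating nontrivially exists. Gompf--Stipsicz prove this by showing $[F]=0$ forces the fibration to be trivial over a neighborhood of an embedded sphere, which is impossible for genus $\geq 2$. I would cite that argument rather than reprove it.
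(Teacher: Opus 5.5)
The paper gives no proof of this statement; it quotes Gompf's theorem from Gompf--Stipsicz. Your outline has the same architecture as Gompf's proof: a closed $\eta$ with $\int_F\eta>0$, fiberwise area forms patched Thurston-style, a K\"ahler model at each critical point, then adding $K\pi^*\omega_B$. Steps 3 and 4 are fine.

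The genuine gap is in Step 2(3), when a vanishing cycle is separating.

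\begin{itemize}
\item \emph{The neighbourhood has two-dimensional $H^2$.} The singular fiber is then $C_1\cup C_2$, two closed surfaces meeting at the critical point. The neighbourhood $N=\pi^{-1}(U)$ retracts onto this fiber. Hence $H^2(N;\R)\cong\R^2$ is detected by $\int_{C_1}$ and $\int_{C_2}$ separately, not only by $\int_F=\int_{C_1}+\int_{C_2}$.
\item \emph{Positivity on each component is forced.} A closed form on $N$ that is positive on $\ker d\pi$ away from the critical point integrates positively over each $C_i$. So the local form you want exists only if $\int_{C_1}\eta>0$ and $\int_{C_2}\eta>0$.
\item \emph{Scaling cannot supply this.} You only know $\int_F\eta>0$. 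No rescaling or local adjustment can compensate, because Step 3 needs $\eta_\alpha-\eta$ to be exact on each piece.
\end{itemize}

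The missing step is a global correction of $\eta$. We have $C_1\cdot C_1=-1$, $C_1\cdot C_2=1$ and $C_1\cdot F=0$, and $C_1$ is disjoint from the other singular fibers. Adding $t$ times a Thom form of $C_1$ therefore shifts $\int_{C_1}\eta$ by $-t$ and $\int_{C_2}\eta$ by $+t$, and changes no other fiber integral. Choose $t\in(-\int_{C_2}\eta,\int_{C_1}\eta)$ for each reducible fiber; this makes $\eta$ positive on every fiber component. Only then can you extend a small multiple of the local K\"ahler form to area forms of the prescribed areas on $C_1$ and $C_2$, and then over $N$. For irreducible singular fibers $H^2(N;\R)\cong\R$, and your scaling argument suffices there.

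The genus $\geq 2$ clause is also not proved. The monodromy sketch does not amount to an argument, and ``triviality over a neighbourhood of an embedded sphere'' is not the mechanism.

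The standard reason is a Chern class computation.
\begin{itemize}
\item \emph{Adapted almost complex structure.} Because every critical point is positive, $X$ carries an almost complex structure $J$ for which the regular fibers are $J$-holomorphic. Near $X^{\mathrm{crit}}$ use the chart's complex structure. Elsewhere use an orientation-compatible complex structure on $\ker d\pi\oplus H$, where $H\cong\pi^*TB$ is a horizontal complement. These patch because the space of orientation-compatible $J$ preserving the oriented plane $\ker d\pi_x$ is contractible.
\item \emph{Pairing with the fiber.} The normal bundle of $F$ is trivial, so $\langle c_1(X,J),[F]\rangle=\chi(F)=2-2g$. This is nonzero for $g\neq 1$, so $[F]\neq 0$ in $H_2(X;\R)$.
\end{itemize}
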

\begin{rem} 
    Achiral Lefschetz fibrations are not necessarily symplectic and they do not necessarily support an almost complex structure on their total space.
\end{rem}
In a symplectic Lefschetz fibration $\pi: X \to B$, given a path $\psi: [0,1]\to B$ starting at a critical value $\psi(0)\in B^{\text{crit}}$ and avoiding $B^{\text{crit}}$ otherwise, we can define the thimble $\Delta_\psi$ as the set of points in $X$ that are sent by symplectic parallel transport to $x_0\in X^{\text{crit}}\cap \pi^{-1}(\psi(0))$. By working in holomorphic neighborhoods $U,V$ around $x_0=(0,0)$ and $\pi(x_0)=0$, the thimble $\Delta_\psi$ satisfies
\begin{equation}\label{eq thimble in local coordinates}
     \Delta_\psi=\bigcup_{t\in (0,1]}V_{\psi(t)}\cup V_0,
\end{equation}
where \begin{equation}\label{eq thimble WITH NO CRITICAL VALUE in local coordinates}
   V_{\psi(t)} = \sqrt{\psi(t)} S^1 = \left\{ \big(\sqrt{\psi(t)} y_1,\sqrt{\psi(t)}y_{2}\big) : (y_1,y_2) \in S^1 \subset \mathbb{R}^{2} \right\} \subset q^{-1}(t),
\end{equation}
for $\psi(t) \neq 0$. By letting $\psi(t)\to 0$, it degenerates to $V_0 = \{0\} \subset \psi^{-1}(0)$.\\ Notice that, for $\psi(t)\neq 0$, Eq. \eqref{eq thimble WITH NO CRITICAL VALUE in local coordinates} can be used to endow the vanishing cycle over $\psi(t)$ of the thimble with a canonical parametrization
\[ \theta\mapsto  \big(\sqrt{\psi(t)} \cos(\theta),\sqrt{\psi(t)}\sin(\theta)\big).\]
Let $\Delta$ be the family of Lagrangian spheres
\[\Delta := \bigcup_{z \in \C}V_z\cup x_0.\]
We conclude with some notation which will be used throughout the paper.
\begin{notation}
     The space of smooth (resp. continuous) sections of a given map $\pi: E \to B$ will be denoted with $\Gamma(E)$ (resp $\Gamma_0(E)$).
\end{notation}

\subsection{Smooth and continuous sections of Lefschetz fibrations over the disk with a single critical value}
\subsubsection{Lefschetz fibrations over the disk with a single critical value}\hfill\\

\noindent Our construction of smooth and continuous sections for a Lefschetz fibration over the disk requires an explicit model for a Lefschetz fibration over the disk. The following construction taken from \cite{Sei03} will be used. We included some details we believe might be of independent interest to the reader.
\begin{prop}\label{prop seidel existence Lef fibration over disk}
    Let $V$ be an essential curve in $\Sigma$. Fix some radius $r_0>0$. There is a symplectic Lefschetz fibration $(E^V,\pi^V)$ over $D^V(r_0)$ (the disk of radius $r_0$) together with an symplectomorphism between the regular fiber over $r_0\in \partial D^V(r_0)$ with its induced symplectic form and a surface $\Sigma$ with appropriate volume form
    \[\phi^V:E_{r_0}^V\to \Sigma,\] 
    such that if $\rho^V$ is the symplectic monodromy around $\partial D^V(r_0)$, then ${\tau_V=\phi^V \rho^V(\phi^V)^{-1}}$ is a Dehn twist along $V$. The Lefschetz fibration $(E^V,\pi^V)$ has a single critical point $x_0$ over $0\in D^V(r_0)$.
\end{prop}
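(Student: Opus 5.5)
We follow Seidel's model and build $E^V$ by gluing a local model to a trivial bundle. First, choose a Weinstein neighbourhood of $V$ in $\Sigma$, symplectomorphic to a neighbourhood of the zero section of $T^*S^1\cong S^1\times(-\epsilon,\epsilon)$. Second, take the local model $q:\C^2\to\C$, $q(z_1,z_2)=z_1^2+z_2^2$, with the standard Kähler form. The fibre $q^{-1}(r_0)$ is symplectomorphic to $T^*S^1$, with the vanishing cycle $\sqrt{r_0}S^1$ of \eqref{eq thimble WITH NO CRITICAL VALUE in local coordinates} going to the zero section.

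We then truncate the local model horizontally. Set
\[E^{\mathrm{loc}}=\{(z_1,z_2): |q(z)|\le r_0,\ |z|^4-|q(z)|^2\le \delta\}\]
for small $\delta>0$. The quantity $|z|^4-|q(z)|^2$ measures the distance from the vanishing cycle within each fibre. The fibres of $E^{\mathrm{loc}}$ are compact annuli $A_z$, with $A_0$ a pinched annulus, and they are identified with the Weinstein neighbourhood of $V$. Near the horizontal boundary $\{|z|^4-|q|^2=\delta\}$ the fibration is trivial, so the connection is flat there.

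Next we glue. Let $\Sigma'=\Sigma\setminus \nu(V)$, and glue $D(r_0)\times\Sigma'$ to $E^{\mathrm{loc}}$ along the horizontal boundary $D(r_0)\times\partial\nu(V)$, using a fibrewise identification that respects the trivialization. This gives $E^V$, and $\phi^V$ is the resulting identification of the fibre over $r_0$ with $\Sigma$. The symplectic form is the standard one on $E^{\mathrm{loc}}$ and the product form on the other piece. To make the two agree we deform the local Kähler form near the horizontal boundary, first to make $q$ trivial and flat there, and then to make it match. Gluing is possible because the two forms differ on the overlap by $d$ of a fibrewise-vanishing primitive, which we interpolate with a cutoff. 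Nondegeneracy on vertical tangent spaces is preserved for small cutoff derivatives. We may also have to rescale the area of $\Sigma$, which is the ``appropriate volume form''.

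The remaining step is the monodromy computation, and we expect it to be the main obstacle. The monodromy is trivial on $\Sigma'$, since that piece carries the product connection. On the annulus it is computed from the explicit Hamiltonian flow of $q$. Parallel transport around $|z|=r_0$ acts on $T^*S^1$ by a map that rotates each level circle $\{|z|^4-|q|^2=c\}$ by an angle depending monotonically on $c$. The antipodal map occurs at $c=0$, and the identity is reached at the boundary only after modifying the form as above. This is a model Dehn twist. Orientation compatibility of the local coordinates makes it a right-handed, i.e.\ positive, twist, so $\tau_V=\phi^V\rho^V(\phi^V)^{-1}$. Since the only critical point of $q$ is the origin, $x_0=0$ is the unique critical point, lying over $0$.

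The delicate part is keeping the boundary correction compactly supported while preserving symplecticity. For that we will follow Seidel's rotation-function argument, choosing the modified form $\omega+d(R(|z|^4-|q|^2)\,d\arg q)$ with a suitable cutoff $R$.
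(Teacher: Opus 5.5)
Your architecture is the one the paper takes from Seidel. You use the same quadric model and the same truncation: under Seidel's fibred map $\Phi$ one has $|z|^4-|q|^2=4|u|^2$, so your $E^{\mathrm{loc}}$ is exactly the paper's $M=\Phi^{-1}(D\times(T^*_{\le\lambda}S^1\setminus S^1))\cup\Delta$ with $\delta=4\lambda^2$. You also use the same gluing to $D\times(\Sigma\setminus\mathcal{N}(V))$ and reach the same monodromy conclusion. The gap is in the step you flag as delicate, and there are two problems.

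First, put $\kappa=|z|^4-|q|^2$. For the standard Kähler form the connection is \emph{not} flat near $\{\kappa=\delta\}$. Parallel transport preserves $\kappa$, but the monodromy rotates each level by an angle that tends to $0$ only as $\kappa\to\infty$. Writing $q=w_1w_2$ with $w_{1,2}=z_1\pm iz_2$, the shift in $\arg w_1$ is $2\pi|w_2|^2/(|w_1|^2+|w_2|^2)$, which is nonzero at $\kappa=\delta$. So your earlier sentence asserting flatness there is false, and everything rests on the modification.

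Second, the modification you propose, $\omega+d(R(\kappa)\,d\arg q)$, is not a smooth form on $E^{\mathrm{loc}}$. The form $d\arg q$ is singular along the entire singular fibre $q^{-1}(0)$, not just at $x_0$. That fibre runs out to the horizontal boundary: on it $\kappa=|z|^4$ takes every value in $[0,\delta]$. Wherever $R'(\kappa)\neq 0$, the term $R'(\kappa)\,d\kappa\wedge d\arg q$ blows up like $1/|q|$. Equivalently, the corrected connection would have holonomy close to the time-$2\pi$ map of the fibrewise Hamiltonian flow of $R(\kappa)$ around arbitrarily small loops about $0$, so it cannot extend over $q^{-1}(0)$. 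No $R$ depending on $\kappa$ alone avoids this. Cutting off in $|q|$ as well does not help: over a small disc around $0$ the connection near the horizontal boundary would stay the curved standard one and could not match the product form on $D\times\Sigma'$.

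What is missing is a trivialisation of the collar $\{\kappa\ \text{near}\ \delta\}$ over the \emph{whole} disc that is smooth across $q^{-1}(0)\setminus\{x_0\}$. Seidel's $\Phi$ supplies exactly this, which is why the paper's sketch defines both $M$ and the gluing through it.

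On the null cone $q^{-1}(0)$, the $\alpha$-dependence in the second component of $\Phi$ cancels, because the rotation $x\mapsto e^{-i\alpha/2}x$ is undone by the geodesic-flow factor. Hence $\Phi$ is a fibred diffeomorphism on all of $\C^2\setminus\Delta$, and it is symplectic on fibres.

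With $\Phi$ in hand, you interpolate with a cutoff $\chi(\kappa)$ between $\omega_{\C^2}$ near $\Delta$ and $\Phi^*\omega_{T^*S^1}$ near $\kappa=\delta$. The interpolation uses a primitive $\beta$ of their difference that vanishes on fibres. Such a $\beta$ exists because the fibrewise periods of the difference of the obvious primitives vanish, the vanishing circle being exact for both. Add $C\,q^*\omega_{\C}$ if you also want total nondegeneracy. Fibrewise nondegeneracy is then automatic, since $d(\chi\beta)$ restricts to zero on every fibre; it has nothing to do with small cutoff derivatives.

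With this form, $\Phi$ is also the gluing map to $D\times(\Sigma\setminus\mathcal{N}(V))$. Your monodromy description then goes through: rotation by $\pi$ on the vanishing circle, identity near $\kappa=\delta$, hence a positive twist.
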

\begin{proof}[Idea of proof.]
    For all the details, see \cite[Prop 1.11]{Sei03}. The construction consists in gluing an appropriate local model $M\subset \C^2$ around the critical point $x_0$ together with the quadratic complex projection map $q: M\to \C$, to a chosen neighborhood $\mathcal{N}(V)$ of the vanishing cycle $V$ in the reference fiber $\Sigma$. Using Darboux coordinates, we can identify $\mathcal{N}(V)$ with $T^*_{\leq \lambda}(S^1)$, the subbundle of the cotangent bundle whose vectors have norm at most $\lambda>0$.\\
The model $M$ is defined via a diffeomorphism $\Phi$ fibered over $\C$ (see \cite[Eq. 1.13]{Sei03}):
\[ M := \Phi^{-1}\big(D\times (T^*_{\leq \lambda}(S^1)\setminus S^1)\big)\cup \Delta.\]
The diffeomorphism is given explicitly as
\begin{equation}\label{eq. Seidel diffeomorphism local model}
        \begin{split}
            \Phi : \C^2\setminus \Delta &\to \C \times (T^*(S^1)\setminus S^1)\\
            \textbf{x} &\mapsto (q(\textbf{x}),\cos(\alpha/2)u-\sin(\alpha/2)\|u\|v+\sin(\alpha/2)\frac{u}{\|u\|})
        \end{split}
    \end{equation}
where $se^{i\alpha}=q(\textbf{x})$, $u:=-\Im(e^{-i\alpha/2}\textbf{x})\|\Re(e^{-i\alpha/2}\textbf{x})\|$ and $v:=\frac{\Re(e^{-i\alpha/2}\textbf{x})}{\|\Re(e^{-i\alpha/2}\textbf{x})\|}$.\\
Finally
\[ E^V := M \ \bigcup_{\mathlarger{\sim}} \ D^V(r_0)\times (\Sigma \setminus \mathcal{N}(V))\]
where the identification uses $\Phi$.
 \end{proof}
 \begin{rem}\label{rem extension Phi to whole fiber}
     We recall here the following observation of Seidel (\cite[page 14]{Sei03}): if we restrict $\Phi$ to a fiber over $s>0$, then it extends to a diffeomorphism \[q^{-1}(s)\xrightarrow{\cong} T_{\leq \lambda}^*(S^1).\] 
 \end{rem}
By restricting the radius of the disk if necessary, we can assume that the holomorphic neighborhood $\mathcal{U} \cong\C^2$ around the critical point $x_0$ is such that $\pi^V(\mathcal{U})=D^V(r_0)$.\\
Let $\ast_{D}:=r_0 \in D^V(r_0)$ be the chosen base-point in the base disk. Given $(X^V,\pi^V)$ as above, let $\mathcal{N}(V)$ be an open neighborhood of $V$ in $\Sigma$ arising from Prop. \ref{prop seidel existence Lef fibration over disk}. Let $\ast_{\Sigma}\in \Sigma \setminus \mathcal{N}(V)$. Fix, via the isomorphism $\phi^V$, an identification of $F_{\ast_D}\cong \Sigma$ and let $\ast_{E}=(\phi^V)^{-1}(\ast_{\Sigma})\in E^V$.\\ In the rest of this work, we will drop the reference to a specific radius $r_0$ as it will not affect the results.
\subsubsection{Sections of mapping torii}\hfill\\

\noindent Given a diffeomorphism $f:\Sigma\to \Sigma$, the mapping torus of $f$ is the following space:
     \[\Sigma_f := \bigslant{\Sigma\times [0,1]}{(x,1)\sim (f(x),0)}.\]
 Mapping torii arise naturally as the boundary of Lefschetz fibrations over a disk $D$. Thanks to \cite[Rem 1.4]{Sei03}, on a chosen collar neighborhood of $\partial D$ our fibration can be taken to be flat. We have then an induced canonical isomorphism
\begin{equation}\label{eq canonical identification boundary EV with mapping torus.}
    \restr{E^V}{\partial D^V}\cong \Sigma_{\tau_V}.
\end{equation}
\begin{rem}\label{rem flat boundary mapping torus}
Our chosen convention for the mapping torus of $f$ implies the following:
\begin{itemize}
    \item[(i)] Parallel transport along the base in the positive direction coincides with the map $f$
    \item[(ii)] $\pi_1(\Sigma_f,\ast_{\Sigma})\cong \langle \pi_1(\Sigma,\ast_{\Sigma}),t\mid ta t^{-1}=f_*^{-1}(a) \ \forall a \in \pi_1(\Sigma,\ast_{\Sigma})\rangle$, i.e. $\Z$ acts via the inverse of $f$.
\end{itemize}
\end{rem}
Choose appropriate coordinates $\{(u,v)\in \C\times [0,1]\mid \|u\|=1\}$ for a neighborhood of the zero section of $T^*S^1$ containing the support of $\tau_V$ and identified with $\mathcal{N}(V)$ in $\Sigma$. By construction $V=\{(u,1/2) \mid u \in S^1\}$ and 
\begin{align*}
    \tau_V : S^1\times [0,1] &\to S^1\times [0,1]\\
    (u,v)&\mapsto (ue^{2\pi iv},v).
\end{align*}
Let $\widetilde{V}=\{(u,1)\mid u \in S^1\}$. For convenience we assume that this loop is based at $\ast_{\Sigma}$. We can define the sections $s_{\widetilde{V}},s_{V}$ of $\Sigma_{\tau_V}$ as follows:

\begin{minipage}[c]{0.5\textwidth}
\begin{align*}
    s_{V} : S^1 &\to \mathcal{N}(V)_{\tau_V}\subset \Sigma_{\tau_V}\\
   \theta &\mapsto \big[(\theta, (e^{ i\theta/2},1/2))\big]
\end{align*}
\end{minipage}
\begin{minipage}[c]{0.5\textwidth}
\begin{align*}
    s_{\widetilde{V}} : S^1 &\to \mathcal{N}(V)_{\tau_V}\subset \Sigma_{\tau_V}\\
    \theta &\mapsto \big[(\theta, (e^{i\theta},1))\big].
\end{align*}
\end{minipage}\\

These sections are well-defined in $\Sigma_{\tau_V}$: 
\begin{align*}
    &s_{V}(2\pi) = \big[(2\pi, (e^{\pi i },1/2))\big] =\big[(0,\tau_V(e^{\pi i},1/2))\big] =[0,(1,1/2)] = s_{V}(0) \\
    &s_{\widetilde{V}}(2\pi) =\big[(2\pi, (e^{2\pi i},1))\big]=\big[(0,\tau_V(e^{2\pi i},1))\big]=[0,(1,1)]=s_{\widetilde{V}}(0)
\end{align*}
Interpolating between the two construction yields a free homotopy \textit{of sections} 
\[H_V :S^1\times [0,1] \to \Sigma_{\tau_V}\] 
between $s_{\widetilde{V}}$ and $s_{V}$ (See Fig. \ref{fig:freehomotopysectionsMapTorus}).\\
Similarly, we can define:\\
\begin{minipage}[c]{0.5\textwidth}
\begin{align*}
    s_{V^{m}} : S^1 &\to \Sigma_{\tau_V}\\
    \theta &\mapsto [\theta, (e^{i\tfrac{2m-1}{2}\theta},1/2)]
\end{align*}
\end{minipage}
\begin{minipage}[c]{0.5\textwidth}
\begin{align*}
    s_{\widetilde{V}^{m}} : S^1 &\to \Sigma_{\tau_V}\\
    \theta &\mapsto [\theta, (e^{im\theta},1)].
\end{align*}
\end{minipage}\\

Let us denote with ``$\star$'' the operation of loop concatenation. Given a loop $\gamma$ and integer $m$, if $m\geq 0$ then $\gamma^m$ denotes the $m$-fold concatenation
\[\gamma^m:= \underbrace{\gamma \star \cdots \star \gamma}_{m-\text{ times}}.\]
If $m<0$ we define \[ \gamma^m := \overline{\gamma}^{|m|}.\]
Since $s_{\widetilde{V}}$ is based at $\ast_{\Sigma}$, the section $s_{\widetilde{V}^{m}}$ represents the class 
\[([\widetilde{V}^m],1)\in \pi_1(\Sigma_{\tau},\ast_{\Sigma})\cong \pi_1(\Sigma,\ast_{\Sigma})\rtimes_{{\tau_V}_*} \Z,\] as it wraps around the (based) curve $\tilde{V}$ $m$-times. We can generalize the isotopy of Fig. \ref{fig:freehomotopysectionsMapTorus} to get a family of sections connecting $s_{V^{m}}$ to $s_{\widetilde{V}^{m}}$.
\begin{rem}
    It is well known that the effect of the Dehn twist $\tau_V$ is independent on the choice of parametrization (hence orientation) of $V$. Our convention is reflected in the picture above where $V$ is oriented counterclockwise. Changing orientation does not change the isotopies, but changes the label we use for the sections.
\end{rem}
\begin{figure}[H]
    \centering
    \includegraphics[width=.8\linewidth]{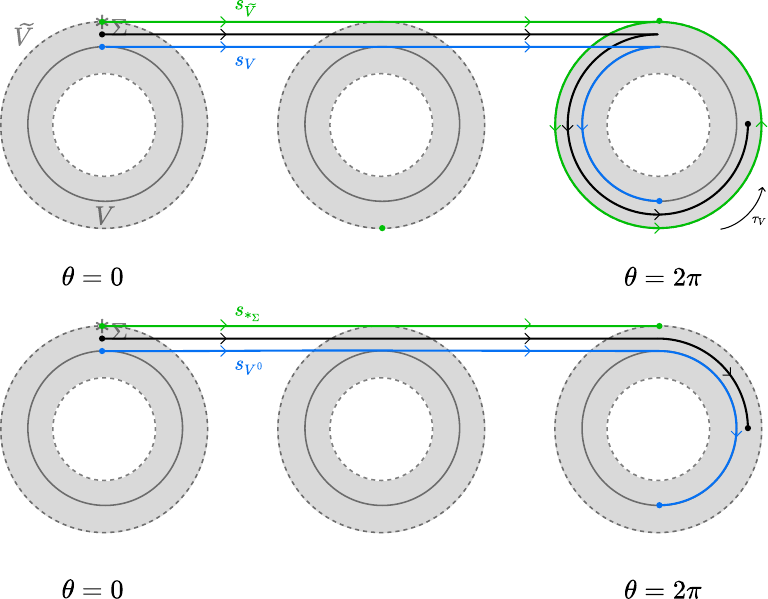}
    \caption{Family of sections of $\Sigma_{\tau_V}$ realizing an homotopy-through-sections starting at $s_{V}$ (resp. $s_{V^{0}}$) and ending at $s_{\widetilde{V}}$ (resp. $s_{\ast_{\Sigma}}$).}
    \label{fig:freehomotopysectionsMapTorus}
\end{figure}

 The following is a simple but useful observation
\begin{lemma} \label{lem existence of sections of mapping torus for any homotopy class of fibre loops}
    If $\ast_{\Sigma}\in \mathrm{Fix}(f)$, then for any class $[\alpha] \in \pi_1(\Sigma,\ast_{\Sigma})$ there is a smooth section $s_\alpha \in \Gamma(\Sigma_f)$ realizing the class 
    \[ ([\alpha],1) \in \pi_1(\Sigma_f,\ast_{\Sigma})\rtimes_{f_*}\Z.\] 
\end{lemma}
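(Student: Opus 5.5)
The plan is to write the section down explicitly in the coordinates of the mapping torus $\Sigma_f=\Sigma\times[0,1]/(x,1)\sim(f(x),0)$. Fix a smooth based loop $\alpha:[0,1]\to\Sigma$ representing $[\alpha]$, with $\alpha(0)=\alpha(1)=\ast_\Sigma$. We may reparametrize $\alpha$ so that it is constant equal to $\ast_\Sigma$ on neighbourhoods $[0,\epsilon)$ and $(1-\epsilon,1]$ of the endpoints; this does not change its based homotopy class.

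Define
\[
s_\alpha(\theta):=\big[(\alpha(\theta),\theta)\big],\qquad \theta\in[0,1],
\]
and regard it as a map $S^1=[0,1]/\{0\sim 1\}\to\Sigma_f$.

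\textbf{Well-definedness and smoothness.} The two ends are $[(\ast_\Sigma,1)]=[(f(\ast_\Sigma),0)]=[(\ast_\Sigma,0)]$, using the hypothesis $\ast_\Sigma\in\mathrm{Fix}(f)$. So $s_\alpha$ is a well-defined closed loop. Composing with the projection $\Sigma_f\to S^1$ gives the identity, so $s_\alpha$ is a section. For smoothness across the gluing, note that near $\theta=1$ the map is $\theta\mapsto(\ast_\Sigma,\theta)$. Near $\theta=0$ it is $(\ast_\Sigma,\theta)$, which in the glued chart $(x,\theta)\mapsto(f(x),\theta-1)$ is the constant-in-$\Sigma$ curve through the fixed point. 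Both one-sided germs are therefore the smooth curve $\theta\mapsto[(\ast_\Sigma,\theta)]$, and $s_\alpha$ is smooth.

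\textbf{Homotopy class.} Under the identification of Rem.~\ref{rem flat boundary mapping torus}(ii), the generator $t$ is represented by the loop $s_{\ast}(\theta)=[(\ast_\Sigma,\theta)]$. This loop is well-defined precisely because $\ast_\Sigma$ is fixed by $f$. The loop $s_\alpha$ is based-homotopic to the concatenation of $\alpha$ in the fibre $\Sigma\times\{0\}$ followed by $s_\ast$. One homotopy is $(\theta,u)\mapsto[(\alpha(\min(\theta/u,1)),\theta)]$, suitably reparametrized, which slides the fibre motion to the start of the circle; alternatively, compare both loops in the simply connected cover $\Sigma\times\R$ of the $S^1$-direction. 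Hence $s_\alpha$ represents $[\alpha]\cdot t$, which in the semidirect product notation is $([\alpha],1)$.

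\textbf{Main obstacle.} The only delicate point is bookkeeping of conventions. Depending on the ordering convention in the semidirect product, the loop $\alpha\star s_\ast$ might correspond to $([\alpha],1)$ or to $(f_*^{\pm1}[\alpha],1)$. If the latter occurs, we replace $\alpha$ by a representative of $f_*^{\mp1}[\alpha]$ from the start. This is possible since $f_*$ is an automorphism of $\pi_1(\Sigma,\ast_\Sigma)$, again using that $\ast_\Sigma$ is fixed. With that adjustment the section realizes the prescribed class for every $[\alpha]$.
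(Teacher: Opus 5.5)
Your proposal is correct and takes essentially the paper's approach. The paper observes that sections of $\Sigma_f$ are the same as twisted loops, i.e.\ paths in $\Sigma$ whose endpoints are related by $f$, so a based loop at a fixed point $\ast_\Sigma$ is already a section; this is exactly your $s_\alpha(\theta)=[(\alpha(\theta),\theta)]$. Your write-up spells out what the paper leaves implicit: smoothness across the seam, by making $\alpha$ constant near its endpoints, and the identification of the class as $[\alpha]\cdot t$ in the presentation of Rem.~\ref{rem flat boundary mapping torus}.
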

\begin{proof}
    Any section of the mapping torus of $f$ can be seen as a twisted loop, i.e. an element of the twisted loop space $\Omega_f(\Sigma)$:
  \[ \Omega_f(\Sigma) := \{\rho :[0,1]\to \Sigma \mid \rho(1)=f\rho(0)\}.\]
    The lemma is then an immediate consequence of the equivalence \[\Gamma(\Sigma_f) = \Omega_f(\Sigma).\]
\end{proof}
Given two sections $s_0,s_1 \in \Gamma(\Sigma_f)$, we use the notation $s_0\sim_{S^1} s_1$ to denote that $s_0$ and $s_1$ are homotopic through sections.
\begin{lemma}\label{lem section homotopic through sections iff conjugate by fiber element} Given two based sections $s_0,s_1 \in \Gamma(\Sigma_f)$ of the mapping torus $\Sigma_f$ representing in $\pi_1(\Sigma_f,\ast_{\Sigma})$ the elements $([\beta],1)$ and $([\alpha],1)$ respectively, then TFAE:
\begin{enumerate}
    \item[(a)]  $s_0\sim_{S^1} s_1$
    \item[(b)]  $[\alpha]= [\zeta]\cdot [\beta]\cdot f^{-1}_*([\overline{\zeta}])$ in $\pi_1(\Sigma,\ast_{\Sigma})$.
\end{enumerate}
\end{lemma}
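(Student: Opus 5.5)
Sections of $\Sigma_f$ correspond to twisted loops, i.e.\ elements of $\Omega_f(\Sigma)=\{\rho:[0,1]\to\Sigma\mid \rho(1)=f\rho(0)\}$, as in the proof of Lemma \ref{lem existence of sections of mapping torus for any homotopy class of fibre loops}. A homotopy through sections is therefore a path in $\Omega_f(\Sigma)$. The plan is to translate (a) into a statement about such paths and read off (b) from the endpoint curve. We fix conventions so that $s_1\leftrightarrow\rho_1$ is based at $\ast_\Sigma$ and, viewed as a based loop via $\ast_\Sigma\in\mathrm{Fix}(f)$, represents $[\alpha]$; likewise $s_0\leftrightarrow\rho_0$ represents $[\beta]$. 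We must match the class $([\alpha],1)$ with the loop $\rho$ using Remark \ref{rem flat boundary mapping torus}(ii). This is where the $f_*^{-1}$ appears, and it has to be checked carefully against the convention $tat^{-1}=f_*^{-1}(a)$.

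For (a)$\Rightarrow$(b), let $H:S^1\times[0,1]\to\Sigma_f$ be a homotopy through sections from $s_0$ to $s_1$. It corresponds to a family $\rho_u\in\Omega_f(\Sigma)$ with $\rho_0,\rho_1$ based at $\ast_\Sigma$. Let $\zeta(u):=\rho_u(0)$, a loop at $\ast_\Sigma$, so that $\rho_u(1)=f(\zeta(u))$. The map $(t,u)\mapsto\rho_u(t)$ on the square $[0,1]^2$ has the following boundary: the bottom edge is $\rho_0$, the left edge is $\zeta$, the top edge is $\rho_1$, and the right edge is $f\circ\zeta$. The square gives the relation $\rho_0\star f(\zeta)\simeq \zeta\star\rho_1$ rel endpoints, that is, $[\beta]=[\zeta][\alpha]f_*([\overline\zeta])$. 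Replacing $\zeta$ by $\overline\zeta$ and rewriting, we get $[\alpha]=[\zeta'][\beta]f_*(\ldots)$. The version with $f_*^{-1}$ stated in (b) should then come out of the identification of $\Omega_f$-loops with group elements, where the $t$-conjugation of Remark \ref{rem flat boundary mapping torus} inverts $f$; equivalently, the based loop read from a section traverses $[0,1]$ in the direction in which the gluing is by $f^{-1}$. We will fix the direction once and for all and check it on the example $s_{\widetilde V}$.

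For (b)$\Rightarrow$(a), we reverse the construction. Given $\zeta$ with the stated relation, the boundary loop of the square described above (built from $\rho_0$, $\zeta$, $\rho_1$ and $f\circ\zeta$ with the appropriate orientations) is null-homotopic in $\Sigma$. Filling the square gives a map $[0,1]^2\to\Sigma$ with $\rho_u(1)=f(\rho_u(0))$ for every $u$, which is a path in $\Omega_f(\Sigma)$ and hence a homotopy through sections. If smoothness is required, we smooth the fill rel boundary.

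The main obstacle is the bookkeeping of conventions: the identification $\Gamma(\Sigma_f)\cong\Omega_f(\Sigma)$, the orientation of the $S^1$ factor, and the resulting sign, namely whether $f_*$ or $f_*^{-1}$ appears. We will resolve this by one direct comparison with the semidirect product presentation in Remark \ref{rem flat boundary mapping torus}(ii): the section $s_\rho$ is homotopic to the based loop $\rho\star t$, and conjugation by $\zeta\in\pi_1\Sigma$ gives $\zeta(\alpha t)\zeta^{-1}=\zeta\alpha f_*^{-1}(\zeta^{-1})t$. This is exactly (b), and it confirms that free homotopy through sections corresponds to conjugation by fibre elements.
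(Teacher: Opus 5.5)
Your route is the right one and is essentially an explicit form of the paper's. The paper reads off the components of $\Omega_f(\Sigma)$ from the exact sequence of the evaluation fibration $\Omega(\Sigma)\to\Omega_f(\Sigma)\to\Sigma$. Your square with left edge $\zeta(u)=\rho_u(0)$ is precisely the action of $\pi_1\Sigma$ on $\pi_0\Omega(\Sigma)$ in that sequence, and your last paragraph is the paper's closing computation in $\pi_1(\Sigma)\rtimes\Z$.

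The gap sits exactly where the lemma has content: the direction of the twist. You combine the closing condition $\rho(1)=f\rho(0)$ with the reading ``$\rho_1$ represents $[\alpha]$'' (and, at the end, $s_\rho\simeq\rho\star t$). With the gluing $(x,1)\sim(f(x),0)$ that defines $\Sigma_f$, these are incompatible, because $u\mapsto[(\rho(u),u)]$ closes up iff $\rho(0)=f(\rho(1))$. This is why your square outputs $[\alpha]=[\zeta'][\beta]f_*([\overline{\zeta'}])$. That is not (b) up to relabelling $\zeta$: the $f_*$- and $f_*^{-1}$-twisted conjugacy relations differ in general (for $f_*=c_g$ inner they are conjugacy of $\beta g$, respectively of $\beta g^{-1}$). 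Saying that $f_*^{-1}$ ``should come out of the identification'' leaves precisely this unresolved. Your last paragraph does not confirm the square either: under your stated conventions the two give different relations. The proposed test on $s_{\widetilde V}$ cannot decide the matter, since $\tau_V$ fixes $\widetilde V$ pointwise and so its twisted loop satisfies both closing conditions.

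The repair is short. Use $\rho_u(0)=f(\rho_u(1))$. Then the right edge of your square is $f^{-1}\circ\zeta$, and the square reads $\beta\star(f^{-1}\circ\zeta)\simeq\zeta\star\alpha$, i.e. $[\alpha]=[\overline{\zeta}][\beta]f_*^{-1}([\zeta])$, which is (b) for the loop $\overline{\zeta}$.

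Conversely, given (b), set $\eta=\overline{\zeta}$ and fill the square with bottom $\beta$, top $\alpha$, left edge $\eta$ and right edge $f^{-1}\circ\eta$; its boundary is null-homotopic exactly by (b). The horizontal slices satisfy $\rho_u(0)=f(\rho_u(1))$, hence give a homotopy through sections of $\Sigma_f$. Your right edge $f\circ\eta$ would instead give sections of $\Sigma_{f^{-1}}$.

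Your remark about the direction of traversal points to the other consistent choice. If you keep $\rho(1)=f\rho(0)$, the section attached to $\rho$ is $u\mapsto[(\rho(1-u),u)]$, which represents $[\overline{\rho}]\,t$. But then the top edge of the square represents $[\alpha]^{-1}$, and $s_\rho\simeq\rho\star t$ no longer holds.

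For (a)$\Rightarrow$(b) you can also bypass $\Omega_f$ entirely. Cut the annulus of the homotopy along the fibre over the base point. Both vertical edges map to the same fibre loop $\zeta$, so $[s_0][\zeta]=[\zeta][s_1]$ in $\pi_1(\Sigma_f)$. This is the ``conjugation by fibre elements'' that your last paragraph takes for granted, and $tat^{-1}=f_*^{-1}(a)$ then gives (b).
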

\begin{proof}
    Recall that $\Gamma(\Sigma_f) = \Omega_f(\Sigma)$. An homotopy through sections corresponds to a path in $\Omega_f(\Sigma)$. Two elements are in the same connected component of $\Omega_f(\Sigma)$ if and only if exists $[\zeta]\in \pi_1(\Sigma,\ast_{\Sigma})$ such that \[([\alpha],1) = ([\zeta],0)\cdot ([\beta],1)\cdot ([\overline{\zeta}],0).\] This is a consequence of the long exact sequence of homotopy groups for the fibration $\Omega(\Sigma)\to \Omega_f(\Sigma)\to \Sigma$. Spelling out the operation group operation completely we get
    \[ [\alpha]= [\zeta]\cdot [\beta]\cdot f^{-1}_*([\overline{\zeta}]), \]
    concluding the proof.
\end{proof}
\subsubsection{Sections of Lefschetz fibrations over the disk with a single critical value}\hfill\\

\noindent Fix the identification 
\begin{equation}\label{eq identification boundary fundamental group with semidirect product}
    \pi_1\big(\restr{E^V}{\partial D^V},\ast_E\big)\cong \pi_1\big(\Sigma,\ast_{\Sigma}\big) \rtimes_{{\tau_V}_*} \Z
\end{equation}
described in Rem. \ref{rem flat boundary mapping torus} together with the horizontal section of $\Sigma_{\tau_V}$ swept by $\ast_{\Sigma}$.

\begin{theorem}\label{thm existence section LF over disk with single thimble} Let $(E^V,\pi^V)$ be as above. For any $m\in \Z$ and any $[\alpha]\in \pi_1(\Sigma,\ast_{\Sigma})$ satisfying \[[\alpha]= [\zeta]\cdot [\widetilde{V}^m]\cdot {\tau_V}^{-1}_*([\overline{\zeta}])\] for some loop $\zeta$, there exists a continuous section \[\sigma_{\alpha}^V : D^V\to E^V\]
whose restriction to $\partial D^V$ satisfies the following property:
\[ [ \restr{\sigma_{\alpha}^V}{\partial D^V}]=([\alpha],1) \in \pi_1(\Sigma,\ast_{\Sigma}) \rtimes_{{\tau_V}_*} \Z\]
under the identification from \eqref{eq canonical identification boundary EV with mapping torus.}
\end{theorem}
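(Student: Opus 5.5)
By Lemma \ref{lem section homotopic through sections iff conjugate by fiber element}, the hypothesis $[\alpha]=[\zeta]\cdot[\widetilde V^m]\cdot\tau_{V*}^{-1}([\overline\zeta])$ says exactly that any section representative $s_\alpha$ of $([\alpha],1)$ is homotopic through sections of $\Sigma_{\tau_V}$ to $s_{\widetilde V^m}$. By the generalized isotopy of Fig.~\ref{fig:freehomotopysectionsMapTorus}, $s_{\widetilde V^m}$ is in turn homotopic through sections to $s_{V^m}$, which takes values in $V$, i.e. in the vanishing cycle over each boundary point. My plan is therefore:
\begin{enumerate}
\item build a section over $D^V$ whose boundary restriction is $s_{V^m}$;
\item prepend a collar $\partial D^V\times[0,1]$ on which the section realizes the homotopy through sections $s_\alpha\sim_{S^1} s_{V^m}$.
\end{enumerate}
Using the flatness of $E^V$ near $\partial D^V$ (\cite[Rem 1.4]{Sei03}), a homotopy through sections of $\Sigma_{\tau_V}$ is literally a section over an annulus $\partial D\times[r_0-\epsilon,r_0]$. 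So step (2) is formal, and the boundary class is $([\alpha],1)$ as required.

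The core is step (1). I will work in the local model $M$ with $q(x_1,x_2)=x_1^2+x_2^2$ and $\Phi$ as in \eqref{eq. Seidel diffeomorphism local model}. Over $z=re^{i\theta}\neq 0$ the vanishing cycle is $V_z=\sqrt z\,S^1\subset\R^2\cdot e^{i\theta/2}$. I define a section over the punctured disk by
\[
\sigma(re^{i\theta})=\sqrt r\, e^{i\theta/2}\bigl(\cos((2m-1)\theta/2),\ \sin((2m-1)\theta/2)\bigr).
\]
This is well defined: under $\theta\mapsto\theta+2\pi$ both factors change sign, so their product is unchanged. It also tends to the critical point $0$ as $r\to0$, so it extends continuously to $D^V$ by $\sigma(0)=x_0$, and it satisfies $q\circ\sigma=z$ because $\cos^2+\sin^2=1$. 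Under $\Phi$ and the identification \eqref{eq canonical identification boundary EV with mapping torus.}, I then check that $\sigma|_{\partial D}$ agrees, up to homotopy through sections inside $V$, with $s_{V^m}$, which winds $(2m-1)/2$ times along $V$ as $\theta$ goes once around. This uses Remark \ref{rem extension Phi to whole fiber}: on each fiber, $\Phi$ carries the canonical parametrization of $V_z$ to the zero section $S^1$ (that is, to $V$), with a phase rotation $e^{i\alpha/2}$ that contributes the extra half twist.

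The main obstacle is this bookkeeping: the half-angle in $\Phi$, the orientation of $V$, and the convention in Remark \ref{rem flat boundary mapping torus} together determine whether the boundary loop is $s_{V^m}$ or some $s_{V^{m'}}$ with a shifted $m'$ such as $1-m$. I will first compute $\Phi\circ\sigma$ on $\partial D$ explicitly, using $v=\Re(e^{-i\alpha/2}\mathbf x)/\|\cdot\|$ and $u=0$ on $V_z$. If the resulting winding number comes out shifted, I will reparametrize by replacing $2m-1$ with an appropriate odd integer; since every odd winding number is realizable this way, every $m$ is still covered.

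A potential alternative for step (1) would be Theorem \ref{thm existence of section fiber bundle if monomorphism plus split exact sequence} applied to $E^V$ minus a neighborhood of $x_0$. However, the fibration is singular at $x_0$, so the explicit formula above is preferable. It also only yields a continuous section, which is all the statement requires; note that $\sigma$ is smooth at $0$ only when $2m-1=\pm1$, i.e. $m\in\{0,1\}$.
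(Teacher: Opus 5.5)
Your proposal follows the paper's proof. You use the same chain $s_\alpha\sim_{S^1}s_{\widetilde V^m}\sim_{S^1}s_{V^m}$ from Lemma~\ref{lem section homotopic through sections iff conjugate by fiber element} and Fig.~\ref{fig:freehomotopysectionsMapTorus}, the same collar gluing, and the identical explicit formula through the critical point. Your fallback of adjusting the odd integer is a sound way to handle the boundary bookkeeping, which the paper only asserts.

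One aside is wrong, though it does not affect the statement. $\sigma$ is never smooth at $0$, not even for $2m-1=\pm1$: for $2m-1=1$ its first coordinate is $\tfrac12\bigl(|z|^{1/2}+z|z|^{-1/2}\bigr)$. More generally, no section can be differentiable at $x_0$, since $d\pi_{x_0}=0$. This is why the paper gets its smooth sections for $m\in\{0,1\}$ from Seidel's $w_{\mathbf a}$, which avoid $x_0$ (Remark~\ref{rem smooth and continuous sections over the disk}).
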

\begin{proof} 
   Let $s_\alpha\in \Gamma(\Sigma_{\tau_V})$ be a section representing the element $([\alpha],1)$ (Lemma \ref{lem existence of sections of mapping torus for any homotopy class of fibre loops}). By chaining together the homotopies through sections arising from Lemma \ref{lem section homotopic through sections iff conjugate by fiber element} and Fig. \ref{fig:freehomotopysectionsMapTorus} we have: \[ s_\alpha\sim_{S^1} s_{\widetilde{V}^m}\sim_{S^1} s_{V^m}.\]
 Let $\Sigma_{\tau_V}\times [0,1]\to A$ be the thickened mapping torus over the annulus $A$ and let $h : A \to \Sigma_{\tau_V}$ be the homotopy of sections between the sections $s_\alpha$ and $s_{V^m}$. After possibly extending our fibration $E^V\to D^V$ over $A$ by identify the the mappng torus over $\partial D^V$ with the one over the interior boundary component of $A$ and extending $s_\alpha$ over $A$ by using $h$, we can assume that $s_a= s_{V^m}$ on $\partial D^V$.\\
   Parametrize $D^V$ using the complex exponential \[[0,1]\times [0,2\pi]\ni (r,\theta)\mapsto re^{i\theta}.\] Consider the following map, whose image lands in an holomorphic neighborhood of the critical point
\begin{equation}\label{eq section over disk tracing mV}\begin{split}
\sigma_{s}^V : D^V &\to E^V\\
    (re^{i\theta}) &\mapsto \bigg(\sqrt{r}e^{i\theta/2} \cos{\big(\tfrac{2m-1}{2}\theta\big)},\sqrt{r}e^{i\theta/2}\sin{\big(\tfrac{2m-1}{2}\theta\big)}\bigg).
\end{split}
\end{equation} 
The formula is well-defined, as for $re^0=re^{i2\pi}$, the value of $\sigma_{s}^V$ is the same. Continuity is clear from the formula since when approaching $0$, $\sigma_s^V$ limits uniformly to $(0,0)\in \C^2$. Lastly, $q\circ \sigma_{s}^V(re^{i\theta})=re^{i\theta}$ can be verified by applying the local coordinate expression for the projection map $\pi$.\\ 
It remains to show that $\sigma_{s}^V$ defines the correct homotopy class under the identification of $\restr{E^V}{\partial D^V}$ with $\Sigma_{\tau_V}$. First notice that $\restr{\sigma_s^V}{\partial D}$ coincides with the section $s_{V^m}$. By then enlarging the fibration via gluing a thickened mapping torus $\Sigma_{\tau_V}\times [0,1]\to S^1\times [0,1]$ and extending the section over the annulus via the isotopy $H_V^{(m)}$ we obtained that $\restr{\sigma_s^V}{\partial D}=s_{\widetilde{V}^m}$, which clearly represents the element $([\widetilde{V}^m],1) \in \pi_1(\Sigma,\ast_{\Sigma}) \rtimes_{{\tau_V}_*} \Z$.
\end{proof}

\begin{rem}\label{rem smooth and continuous sections over the disk}
The sections $\sigma_m^V$ build in Thm. \ref{thm existence section LF over disk with single thimble} are continuous but not smooth as they pass through the critical point. Seidel, in \cite[Lemma 2.16]{Sei03}, shows that for a small enough base radius $r$, pseudo-holomorphic sections $w$ of the Lefschetz fibration ${E^V\to D^V}$ satisfying the (Lagrangian) boundary condition
\[ \restr{w}{\partial D}\subset \Delta\] are of the form:
\begin{align*}
    w_{\textbf{a}} : D^V &\to \C^2\subset E^V\\
    z &\mapsto \dfrac{\textbf{a}}{\sqrt{r}}z + \sqrt{r}\overline{\textbf{a}}
\end{align*}
where $\textbf{a}\in \C^2$ satisfies $\|\Re(\textbf{a})\|^2=\|\Im(\textbf{a})\|^2=1/4$ and $\langle \Re(\textbf{a}), \Im(\textbf{a})\rangle=0$. The set of suitable vectors $\textbf{a}$ can be identified with the sphere bundle $S(T^*S^1)={S^1}_+\bigsqcup {S^1}_-$.\\ Following our convention for the orientation on $V$, we see that for $\textbf{a}\in {S^1}_+$ (resp. $\textbf{a}\in {S^1}_-$)  \[ \restr{w_{\textbf{a}}}{\partial D} \simeq s_{V} \ \ \ (\text{resp. } \restr{w_{\textbf{a}}}{\partial D} \simeq s_{V^{-1}}).\]
\end{rem}

The following result can be interpreted a sort of converse for Rem. \ref{rem smooth and continuous sections over the disk}. Using Lemma \ref{lem existence of sections of mapping torus for any homotopy class of fibre loops} and the identification from \eqref{eq canonical identification boundary EV with mapping torus.} we can identify a section of the boundary of $E^V$ with an element  $([\alpha],1)\in \pi_1(\Sigma,\ast_{\Sigma})\rtimes_{{\tau_V}_*}\Z$.

\begin{theorem}\label{theorem smooth and continuous sections criteria for sections over the disk}
    Let $(E^V,\pi^V)$ be as above and let $s\in \Gamma(\restr{E^V}{\partial D})$ be a section representing the element $([\alpha],1)\in \pi_1(\Sigma,\ast_{\Sigma})\rtimes_{{\tau_V}_*}\Z$.\\
    There exists a section $\sigma\in \Gamma_0(E^V)$ over $D$ extending $s$ and going through the cri\-ti\-cal point of $\pi^V$ if and only if there exists a based loop $\zeta : S^1\to \Sigma$ such that  
    \[\alpha=[\zeta]\cdot[\widetilde{V}^m]\cdot {\tau_V}^{-1}_*([\overline{\zeta}]) \in \pi_1(\Sigma,\ast_{\Sigma})\]
 for some $m\in \Z$. Such section can be taken to avoid the critical point (and therefore smoothed) if and only if $m=0,1$.
\end{theorem}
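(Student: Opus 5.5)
The plan is to prove the two directions separately, and then treat the smoothability clause as a refinement of the necessity argument.

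\emph{Sufficiency.} This is already Theorem \ref{thm existence section LF over disk with single thimble}. If $[\alpha]=[\zeta]\cdot[\widetilde V^m]\cdot\tau_{V*}^{-1}([\overline\zeta])$, then Lemma \ref{lem section homotopic through sections iff conjugate by fiber element} and Fig. \ref{fig:freehomotopysectionsMapTorus} give $s\sim_{S^1}s_{\widetilde V^m}\sim_{S^1}s_{V^m}$. I would absorb this homotopy through sections into a collar of $\partial D$. The section can then be completed over the inner disk by the explicit formula \eqref{eq section over disk tracing mV}, which passes through $x_0$.

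For $m\in\{0,1\}$ I would use a section avoiding $x_0$ instead. For $m=1$ this is a Seidel section $w_{\mathbf a}$ with $\mathbf a\in S^1_+$ from Rem. \ref{rem smooth and continuous sections over the disk}, whose boundary is $\simeq s_V$. For $m=0$ it is the constant section $D\times\{\ast_\Sigma\}$ in the product region $D\times(\Sigma\setminus\mathcal N(V))$, whose boundary is $s_{\ast_\Sigma}\simeq s_{V^0}$ by Fig. \ref{fig:freehomotopysectionsMapTorus}. These sections avoid $x_0$ and, after the collar homotopy is smoothed, are smooth. More generally, any continuous section avoiding $x_0$ lies where $\pi^V$ is a submersion, so it can be smoothed rel a neighbourhood of the boundary.

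\emph{Necessity.} Let $\sigma\in\Gamma_0(E^V)$ extend $s$ and pass through $x_0$. By continuity, $\sigma(D_\varepsilon)$ lies in an arbitrarily small ball around $x_0$ for $\varepsilon$ small. In the holomorphic chart, each fiber near $x_0$ is a small neighbourhood of the vanishing cycle $V_z$, i.e. a copy of the annulus $T^*_{\le\lambda}S^1\cong\mathcal N(V)$ (Rem. \ref{rem extension Phi to whole fiber}). Hence $\sigma|_{\partial D_\varepsilon}$ is a section of the annulus mapping torus $\mathcal N(V)_{\tau_V}$.

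Sections of $\mathcal N(V)_{\tau_V}$ are twisted loops in an annulus. The twist $\tau_V$ acts trivially on $\pi_1(\mathcal N(V))\cong\Z$, so by Lemma \ref{lem section homotopic through sections iff conjugate by fiber element} their homotopy classes through sections are indexed by an integer winding number. The sections $s_{V^m}$ realize every such class, so $\sigma|_{\partial D_\varepsilon}\sim_{S^1}s_{V^m}$ for some $m$.

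The restriction of $\sigma$ to the annulus $\overline{D\setminus D_\varepsilon}$ is a homotopy through sections of the flat boundary mapping torus. It runs from $\sigma|_{\partial D_\varepsilon}$, transported to $\partial D$, to $s$. Composing with $s_{V^m}\sim_{S^1}s_{\widetilde V^m}$ and applying Lemma \ref{lem section homotopic through sections iff conjugate by fiber element} yields the required $\zeta$.

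\emph{Smoothable case.} Suppose now that $\sigma$ avoids $x_0$. Then $\sigma(0)$ lies in the singular fiber away from the node. Near $\sigma(0)$ the map $\pi^V$ is a submersion, so $E^V$ is locally a product there, and $\sigma|_{\partial D_\varepsilon}$ is homotopic through sections to a section near the constant section at $\sigma(0)$. Via $\Phi$, the singular fiber minus $x_0$ corresponds to the two sides $v<1/2$ and $v>1/2$ of $V$, each sitting in $\Sigma\setminus V$. Transporting out to $\partial D$, the constant section on one side is $s_{\ast_\Sigma}\sim s_{V^0}$, and on the other side it is $s_{\widetilde V}\sim s_V$. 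So the boundary class is realized with $m\in\{0,1\}$.

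\emph{Main obstacle.} The hardest step is the local analysis near $x_0$. The necessity argument needs the image of a small circle under a continuous section through $x_0$ to lie in the Seidel annulus model. The smoothable case needs an identification of the two sides of the node with $s_{V^0}$ and $s_{V^1}$ respectively. I would do both computations in the coordinates of \eqref{eq. Seidel diffeomorphism local model}: compute the winding of $\sigma(\varepsilon e^{i\theta})$ around $V_{\varepsilon e^{i\theta}}$, and compare it with the explicit sections $s_{V^m}$.
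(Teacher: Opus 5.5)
Your proposal is correct and follows essentially the same route as the paper. Sufficiency comes from the explicit section through $x_0$ after a collar homotopy. Necessity comes from restricting to a small circle whose image lies in the annulus mapping torus of the local model and comparing with $s_{V^m}$. The smoothable case works in the trivialization of the complement of $\Delta$ and distinguishes the two sides of $V$.

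The differences are cosmetic. You realize $m=0$ by the constant section in the product region, where the paper uses a Seidel section with $\mathbf a\in S^1_-$. You classify sections of the annulus mapping torus via the twisted-conjugacy lemma, using that the action on $\pi_1\cong\Z$ is trivial, where the paper uses an explicit identification with $T^2\times[0,1]$.
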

\begin{proof}
 We first prove the statement concerning smoothable sections.
    \begin{itemize}
        \item[$(\Leftarrow)$] This is immediate from Rem. \ref{rem smooth and continuous sections over the disk} together with the isotopies represented in Fig. \ref{fig:freehomotopysectionsMapTorus}.
    \item[$(\Rightarrow)$] Let $\sigma$ be a continuous section of $E^V\to D^V$ such that $\sigma(1)=\ast_{\Sigma}\in E^V_1$, $\sigma(0)\neq x_0$ and let $s=\restr{\sigma}{\partial D}$.\\ We first prove that under our assumption of avoiding the critical point, $\sigma$ can be smoothed relative to $s$. Since $\sigma(0)\neq x_0$, on a sufficiently small disk $D'\subset D$ around the critical value, \[ \mathrm{Im}(\sigma)\cap \restr{\Delta}{D'}=\varnothing.\]
    By the trivialization provided in Eq. \eqref{eq. Seidel diffeomorphism local model}, 
    \[ \restr{E^V}{D'}\setminus \restr{\Delta}{D'} \cong D'\times \Sigma\setminus V,\]
    hence $\sigma$ is a section of an actual (trivial) fibration over $D'$. Therefore we can smooth $\restr{\sigma}{D'}$ and then smooth $\sigma$ in the complement of $D'$ relative to $s$ and $\restr{\sigma}{\partial D'}$, obtaining a smooth section $\sigma'$ with the same boundary condition.\\
   Assume now that $\sigma$ is smooth. By Prop. \ref{prop seidel existence Lef fibration over disk}, outside a neighborhood of the critical point, the fibration $E^V\to D^V$ can be taken to be flat. Hence there is a well-defined family of basepoints, one for each fiber over the radius $r(t)=t:[0,1]\to D$, obtained by parallel transport of $\ast_E\in F_{\ast_D}$ along any path in the base.\\
   Assume $V$ is non-separating: by using the same reasoning as in the smoothing part of the proof, if we choose a small enough disk around the critical value then the section will avoid $\Delta$ entirely over such disk $D'$. Therefore, thanks to Eq. \eqref{eq. Seidel diffeomorphism local model}, the restriction of $\sigma$ can be considered a section of the trivial fibration $D'\times \Sigma\setminus V$. The following observations are then immediate and the proof is left to the reader:
    \begin{itemize}
        \item[(a)] We can assume that $\restr{\sigma}{\partial D'}$ is a based loop and
        \item[(b)] $\restr{\sigma}{\partial D'}\sim_{S^1} s_{\ast_{\Sigma}}$. 
    \end{itemize}
    The restriction $\restr{\sigma}{D\setminus D'}$ defines an homotopy through sections between $\restr{\sigma}{\partial D'}$ and $\restr{\sigma}{\partial D}=s$, proving \[s \sim_{S^1} s_{\ast_{\Sigma}}.\] By Lemma \ref{lem section homotopic through sections iff conjugate by fiber element} we conclude.\\
    Assume now that $V$ is separating. If $\ast_{\Sigma}$ and $\sigma(0)$ belong to the same connected component of $E^V_0\setminus \{x_0\}$ the same proof in the non-separating case can be applied. If $\ast_{\Sigma}$ and $\sigma(0)$ belong to different connected components, by choosing a new base point $\ast'$ in the appropriate connected component we conclude as before that
    \[ \restr{\sigma}{\partial D'} \sim_{S^1} s_{\ast'}.\]
     The homotopy through section represented in Fig. \ref{fig:isotopyVsepdiffcc} then shows \[\restr{\sigma}{\partial D'}\sim_{S^1} s_V.\]
    As in the previous cases we then conclude that $s \sim_{S^1} s_{\widetilde{V}}$.
    \end{itemize}
We now prove the general statement.
    \begin{itemize}
        \item[$(\Leftarrow)$] This is immediate from Thm. \ref{thm existence section LF over disk with single thimble}. 
        \item[$(\Rightarrow)$] Let $\sigma\in \Gamma(E^V)$ such that $\sigma(0)=x_0$, the critical point. Since $\sigma$ is continuous and $x_0\in M$, there exists $D''\subset D$ such that $\restr{\sigma}{D''}\subset M\cap \pi^{-1}(D'')$. Thanks to Prop. \ref{prop seidel existence Lef fibration over disk} and Rem. \ref{rem extension Phi to whole fiber}, over each $z\in D''$, $M\cap \pi^{-1}(\{z\})\cong T^*_{\leq \lambda}(S^1)$, hence \[ M\cap \pi^{-1}(\partial D'') \cong T_{\leq \lambda}(S^1)_{\tau_V},\] the mapping torus of the restriction of the Dehn twist about $V$ to a tubular neighborhood of $V$.\\
    The following change of coordinates shows that $T_{\leq \lambda}(V)_{\tau_V}$ is isomorphic to the trivial bundle $\R/\Z\times \R/\Z\times [0,1]\to \R/\Z$ 
    \begin{align*}
         \bigslant{T_{\leq 1}^*(S^1)\times [0,1]}{(\tau_V(\theta,s),0)\sim ((\theta,s),1)}&\to \R/\Z\times \R/\Z\times [0,1]\\
         [(\theta,s,t)] &\mapsto ([\theta+st],[t],s).
    \end{align*}
     The identification sends the section swept by $(0,1)\in \partial T_{\leq 1}(V)$ to the loop representing the element $(0,1)\in \Z^2\cong \pi_1(S^1\times S^1\times [0,1])$.\\ 
     Since $\sigma_{\restr \partial{D''}}$ defines a loop in $T_{\leq \lambda}(V)_{\tau_V}$, it has to be of the form $(m,1)\in \Z^2$, for an appropriate base point on $\R/\Z$.\\
     By contracting along the $[0,1]$ factor, we obtain a homotopy through section 
     \[ \sigma_{\restr \partial{D''}} \sim s_{V^m}.\]
     Using the appropriate homotopies from Fig. \ref{fig:freehomotopysectionsMapTorus} and Lemma \ref{lem section homotopic through sections iff conjugate by fiber element} we conclude.
    \end{itemize}
\end{proof}

\begin{figure}[h]
    \centering
    \includegraphics[width=0.8\linewidth]{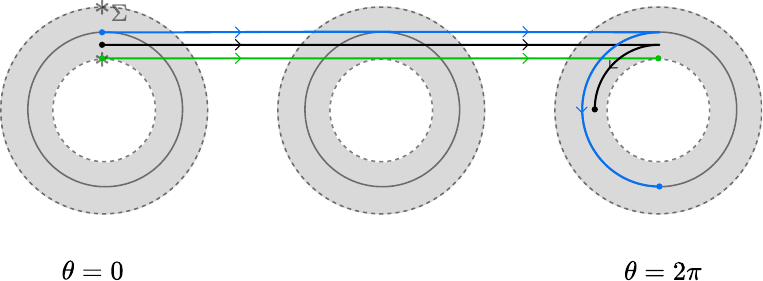}
    \caption{The homotopy of sections between $\restr{\sigma}{\partial D'}$ and $s_{\widetilde{V}}$ when $\sigma(0)$ and $\ast_{\Sigma}$  belong to different connected components of $\mathcal{N}(V)\setminus V$.}
    \label{fig:isotopyVsepdiffcc}
\end{figure}

\subsection{Smooth and continuous sections for general Lefschetz fibrations over the disk}
\subsubsection{Lefschetz fibrations over the disk with several critical values.}\hfill\\
Let $\Sigma$ be an oriented surface and let $\tau\in \mathrm{Aut}(\Sigma)$ satisfying 
\begin{equation}\label{eq positive factorisation monodromy}
    \tau=\tau_{V_k}\circ \cdots \circ \tau_{V_1},
\end{equation} 
for a family of embedded curves $V_1,\dots , V_k\subset \Sigma$. We can use this data to build a Lefschetz fibration $\pi: E\to D$ with $k$ critical values and the curves $V_i$'s as vanishing cycles.\\
Mark points $\ast_D\in \partial D$ and $z_1,\dots,z_k \in \mathrm{int}(D)$; choose $k$ embedded paths ${\psi_i:[0,1]\to D}$ such that 
\begin{itemize}
    \item $\forall i$, $\psi_i(0)=\ast_D$, $\psi_i\big((0,1]\big)\subset \mathrm{Int}(D)$ and $\psi_i(1)=z_i$
    \item the paths are all disjoint from each other except at $\ast_D$.
\end{itemize}
In the given disk $D$ we can find a bouquet of $k$ disks $D_i\ni z_i$, $\bigvee_{\ast_D} D_i$,
disjoint everywhere but on the boundary point $\ast_D\in \partial D$. Denote with $\gamma_i$ the based loop at $\ast_D\in D$ parameterizing the boundary of $D_i$ in the positive direction.
\begin{defi}
    A \textit{geometric} or \textit{Artin basis} for $\pi_1(D\setminus \{z_1,\dots,z_k\},\ast_D)$ is such an ordered  $k$-uple $([\gamma_1],\dots ,[\gamma_k])$.\\
    A geometric basis for $D\setminus \{z_1,\dots,z_k\}$ is an ordered $k$-uple of simple smooth representatives for a geometric basis for $\pi_1(D\setminus \{z_1,\dots,z_k\},\ast_D)$
\end{defi}
For each $i\in \{ 1,\dots,k\}$, let $(E^{V_i},D^{V_i})$ be the Lefschetz fibration with fiber $\Sigma$ and single vanishing cycle $V_i$ built via Prop. \ref{prop seidel existence Lef fibration over disk}. For each index $i$, we identify the disks $D_i$ and $D^{V_i}$ and  $\ast_D$ with $1\in \partial D^{V_i}$.\\
Since $D\setminus \bigvee_{\ast_D} D_i$ is contractible, we can extend the fibration on it by simply pulling back the restriction of the fibration over the bouquet of boundaries $\bigvee_{\ast_D}\gamma_i$.\\
This construction depends on the choice of factorization \eqref{eq positive factorisation monodromy} of $\tau$. An e\-qui\-va\-lent positive factorization of the monodromy will transform the geometric basis by the appropriate sequence of Hurwitz moves and yield an isomorphic Lefschetz fibration.
\begin{rem}\label{rem sectorial structure and ordered k-uple of vanishing cycles}
    We can always assume that, up to isomorphism, any Lefschetz fibration $\pi:E\to D$ over the disk with fiber $\Sigma$ arises from this construction. Let $\{z_1,\dots,z_k\}\subset \mathrm{Int}(D)$ be the set of critical values of $\pi$. Choose a geometric basis $\{\gamma_i\}_{i=1}^k$ for $D\setminus \{z_1,\dots,z_k\}$,  then the restriction of $\pi$ to each $D_i$ satisfies
    \[ \restr{E}{D_i}\cong E^{V_i},\]
    where $V_i$ is the vanishing cycle arising from the vanishing path $\psi_i$. On $D\setminus \bigvee_{\ast_D} D_i$, the fibration can be trivialized, providing the required isomorphism.\\
    By successive applications of the identification provided by \eqref{eq canonical identification boundary EV with mapping torus.}, we have a preferred identification
    \[ \restr{E}{\partial D}\cong \Sigma_{\tau}.\]
\end{rem}

\begin{lemma}\label{lem section over D same as section over bounquet of disks}
    Let $\pi: E\to D$ be a Lefschetz fibration as above. Up to homotopy through sections, we have the following identifications
\begin{equation*}
    \begin{split}
        \Gamma(E)&\cong\Gamma\big(\restr{E}{\bigvee_{\ast_D}D_i}\big)\\
        \Gamma_0(E)&\cong\Gamma_0\big (\restr{E}{\bigvee_{\ast_D}D_i}\big)
    \end{split}
\end{equation*}
\end{lemma}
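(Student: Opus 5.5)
The plan is to realize both identifications through one geometric fact: the bouquet $K:=\bigvee_{\ast_D} D_i$ is a strong deformation retract of $D$, and the retraction can be chosen to be the identity on a neighbourhood of $K$. This neighbourhood should contain all the critical values $z_1,\dots,z_k$ and the holomorphic neighbourhoods of the critical points.

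\textbf{Step 1: the retraction.} Set $W:=\overline{D\setminus K}$. By construction $W$ is contractible, avoids every critical value, and meets $K$ along the bouquet of boundary circles $\bigvee_{\ast_D}\gamma_i$. Slightly shrink the disks $D_i$ to $D_i'\subset D_i$, still containing $z_i$ in their interiors. Then choose a deformation retraction $r_t:D\to D$ with $r_0=\mathrm{id}$, $r_1(D)=K$, and $r_t$ equal to the identity on $\bigcup_i D_i'$ for all $t$. The fibration $E$ is locally trivial over $D\setminus\bigcup_i D_i'$, which contains the whole support of the homotopy. Hence the Homotopy Lifting Property (in its relative, HELP form used in Rem.~\ref{rem alternative proof of nullhomotopy section over boundary 2 cell}) applies on $\overline{D\setminus\bigcup_i D_i'}$. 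Alternatively, it is simpler to use the explicit construction of $E$ above. Over $W$, the fibration is the pullback of $\restr{E}{\bigvee\gamma_i}$ under a retraction $\rho:W\to\bigvee\gamma_i$, so there is a fibrewise map $R:E\to\restr{E}{K}$ covering $r_1$.

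\textbf{Step 2: restriction and extension maps.} Restriction gives $\Gamma(E)\to\Gamma(\restr{E}{K})$ and $\Gamma_0(E)\to\Gamma_0(\restr{E}{K})$. In the other direction, a section $\sigma$ over $K$ extends to $D$ by $\hat\sigma(z):=R(\sigma(r_1(z)))$ on $W$, with $\hat\sigma=\sigma$ on $K$. This is well defined because $R$ restricts to the identity over $\bigvee\gamma_i$. Extension followed by restriction is the identity on the nose. Restriction followed by extension is connected to the identity through sections: lift the homotopy $r_t$ through the pullback structure by setting $\sigma_t(z):=R_t(\sigma(r_t(z)))$, where $R_t$ is the fibrewise lift of $r_t$ (the parallel transport along the tracks of $r_t$, which exists because $W$ carries no singular fibres). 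The same formulas applied to homotopies through sections show that both maps descend to homotopy classes. This gives the bijections in both the continuous and the smooth category.

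\textbf{Main obstacle.} The delicate point is smoothness. The extension $\hat\sigma$ is only continuous (piecewise smooth) along $\bigvee\gamma_i$, and $K$ itself is not a smooth submanifold at the wedge point $\ast_D$. I would handle this by defining $\Gamma(\restr{E}{K})$ as sections that are smooth on each $D_i$. I would then take $r_t$ to be smooth away from $\ast_D$ and collared near $\partial D_i$, so that $\hat\sigma$ is smooth after a relative smoothing. That smoothing is possible because $W$ lies in the regular locus, where $E$ is an honest smooth bundle, exactly as in the smoothing argument of Thm.~\ref{theorem smooth and continuous sections criteria for sections over the disk}. A neighbourhood of $\ast_D$ is handled the same way, since $\ast_D$ lies on $\partial D$ far from all critical values.
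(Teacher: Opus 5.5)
Your proposal is correct and follows the same route as the paper: restrict to the bouquet, and extend back over the complement using the pullback/product structure of $E$ there (exactly how the paper built $E$ over $D\setminus\bigvee_{\ast_D}D_i$), with the homotopy-inverse property coming from that same structure---you make this explicit through a lifted deformation retraction and add the smoothing discussion the paper leaves out. Two slips need repair: since $R$ covers $r_1$, the point $R(\sigma(r_1(z)))$ lies over $r_1(z)$ rather than over $z$, so you need $\hat\sigma(z):=(R|_{E_z})^{-1}\big(\sigma(r_1(z))\big)$ (and likewise the inverse transport $E_{r_t(z)}\to E_z$ in $\sigma_t$); and $\overline{D\setminus K}$ is not contractible, since it retracts onto $\bigvee_{\ast_D}\gamma_i$ as you yourself use, although nothing in your argument relies on that claim.
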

\begin{proof}
Let $\pi: E\to D$ be the given Lefschetz fibration, choose a geometric basis for $D\setminus \{z_1,\dots,z_k\}$. By restricting $D$ to the bouquet of disks, we immediately obtain maps 
\begin{equation*}
    \begin{split}
        \Gamma(E)&\to\Gamma\big(\restr{E}{\bigvee_{\ast_D}D_i}\big)\\
        \Gamma_0(E)&\to\Gamma_0\big (\restr{E}{\bigvee_{\ast_D}D_i}\big)
    \end{split}
\end{equation*}
    The inverse maps are obtained by trivializing $E$ over $D\setminus \bigvee_{\ast_D}D_i$ and using such trivialization to extend the section to the whole $D$. This also shows that up to homotopy of sections, the two maps are inverse of each other.
\end{proof}

\subsubsection{Sections of concantenated mapping torii}\hfill\\
In order to characterize the loops that can arise as boundary of sections in $\Gamma\big(\restr{E}{\bigvee_{\ast_D}D_i}\big)$ or $\Gamma_0\big(\restr{E}{\bigvee_{\ast_D}D_i}\big)$, we need a suitable description of $\pi_1(\Sigma_{\tau},\ast_{\Sigma})$.\\ 
Let $F_k$ be the free group generated by the set $\{[\gamma_i]\}_{i=1}^k$. Let
\begin{align*}
    \rho: F_k&\to \mathrm{Aut}\big(\pi_1(\Sigma,\ast_{\Sigma})\big)\\
     \rho([\gamma_i])([\alpha])&={\tau_{V_i}}_*^{-1}([\alpha]),
\end{align*}
using this representation we can define the semidirect product $\pi_1(\Sigma,\ast_{\Sigma})\rtimes_\rho F_k$. We will use the symbol ``$\bullet$'' to denote its group operation. An inductive application of Seifert-Van Kampen theorem reveals that
\[ \pi_1(\restr{E}{\bigvee_{\ast_D}\gamma_i},\ast_E)\cong \pi_1(\Sigma,\ast_{\Sigma})\rtimes_\rho F_k.\]
Choose a geometric basis $\vec{\gamma}:=(\gamma_1,\dots , \gamma_k)$ of $D\setminus \{z_1,\dots,z_k\}$. We define the map
\[\partial D \hookrightarrow D \setminus \{z_1,\dots,z_k\} \simeq \bigvee_{\ast_D}^k \gamma_i.\]
On total spaces this gives us the inclusion 
\[\Sigma_{\tau} \cong \restr{E}{\partial D} \hookrightarrow \restr{E}{\bigvee_{\ast_D}\gamma_i}\]
which induces the following map at the level of fundamental groups: 
\begin{equation}\label{eq definition inclusion mapping torus of composition monodromy into fiber sum of mapping torus}
    \begin{split}
        \iota_{\vec{\gamma}} : \pi_1(\Sigma)\rtimes_{\tau_*} \mathbb{Z} &\to \pi_1(\Sigma,\ast_{\Sigma})\rtimes_\rho F_k\\
        ([\alpha],0) &\mapsto ([\alpha],0)\\
        (0,1) &\mapsto (0,[\gamma_1])\bullet \cdots \bullet(0,[\gamma_k]).
    \end{split} 
\end{equation}
The following lemma describes how to \textit{concatenate} homotopy classes of sections of $\Sigma_{\tau_{V_i}}$ to obtain a well-defined homotopy class of $\pi_1(\Sigma_{\tau},\ast_{\Sigma})$. 
\begin{lemma}\label{lemma convenient identification of fundamental group mapping torus as subgroup of bouquet of mapping torii}
    The map $\iota_{\vec{\gamma}}$ from \eqref{eq definition inclusion mapping torus of composition monodromy into fiber sum of mapping torus} is well defined and injective. Given a sequence of sections $s_i:S^1\to \Sigma_{\tau_{V_i}}$, representing homotopy classes \[([\alpha_i],1) \in \pi_1(\Sigma_{\tau_{V_i}},\ast_{\Sigma}),\] there is a unique class  $([\alpha],1)\in \pi_1(\Sigma_{\tau},\ast_{\Sigma})$ 
    which represents the homotopy class of the concatenation of the $s_i$ as a loop in $\Sigma_{\tau}$ and whose representative $\alpha$ must satisfy
\begin{equation*}
    \alpha\sim \alpha_1\star \tau_{V_1}^{-1}\alpha_2\star \cdots \star (\tau_{V_{k-1}}\circ \cdots \circ \tau_{V_1})^{-1}\alpha_k \text{ in } \Sigma.
\end{equation*}
\end{lemma}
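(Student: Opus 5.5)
The plan is to deal first with well-definedness and injectivity of $\iota_{\vec{\gamma}}$, and then derive the concatenation formula by computing in the semidirect product $\pi_1(\Sigma,\ast_{\Sigma})\rtimes_\rho F_k$.

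For well-definedness, I will check that the images of the generators satisfy the defining relation of $\pi_1(\Sigma_\tau)$ from Rem. \ref{rem flat boundary mapping torus}(ii). Concretely, set $T:=(0,[\gamma_1])\bullet\cdots\bullet(0,[\gamma_k])$. Conjugating $([\alpha],0)$ by $T$ applies $\rho([\gamma_1])\circ\cdots\circ\rho([\gamma_k])$, which equals
\[
{\tau_{V_1}}_*^{-1}\circ\cdots\circ{\tau_{V_k}}_*^{-1}=(\tau_{V_k}\circ\cdots\circ\tau_{V_1})_*^{-1}=\tau_*^{-1}.
\]
So $T([\alpha],0)T^{-1}=(\tau_*^{-1}[\alpha],0)$, as required. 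I also need this map to be the one induced by the inclusion $\restr{E}{\partial D}\hookrightarrow\restr{E}{\bigvee\gamma_i}$. For that I will use that $\partial D$ is homotopic in $D\setminus\{z_i\}$ to $\gamma_1\star\cdots\star\gamma_k$ (geometric basis, positive orientation), that the fibre is fixed, and that by the HLP the horizontal lift of $\ast_\Sigma$ is carried along.

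For injectivity, I will argue with normal forms. Every element of $\pi_1(\Sigma)\rtimes_{\tau_*}\Z$ is uniquely $([\alpha],n)$, and it maps to $([\alpha],T^n)$ in $\pi_1(\Sigma)\rtimes_\rho F_k$. Since $T=\gamma_1\cdots\gamma_k$ has infinite order in the free group $F_k$, the $F_k$-coordinate $T^n$ determines $n$. The $\pi_1(\Sigma)$-coordinate then determines $[\alpha]$. Hence the map is injective.

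For the concatenation statement, I will realise each section $s_i$ of $\Sigma_{\tau_{V_i}}$ as the element $([\alpha_i],[\gamma_i])$ of the $i$-th factor, using the identification \eqref{eq identification boundary fundamental group with semidirect product}. The concatenation of the $s_i$ is then the loop over $\gamma_1\star\cdots\star\gamma_k$ representing
\[
([\alpha_1],\gamma_1)\bullet([\alpha_2],\gamma_2)\bullet\cdots\bullet([\alpha_k],\gamma_k).
\]
Expanding by induction with the rule $(a,g)\bullet(b,h)=(a\cdot\rho(g)(b),gh)$ gives the first coordinate
\[
[\alpha_1]\cdot{\tau_{V_1}}_*^{-1}[\alpha_2]\cdots\big({\tau_{V_1}}_*^{-1}\circ\cdots\circ{\tau_{V_{k-1}}}_*^{-1}\big)[\alpha_k],
\]
and the last factor is $(\tau_{V_{k-1}}\circ\cdots\circ\tau_{V_1})_*^{-1}[\alpha_k]$. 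The second coordinate is $T$. So the product lies in the image of $\iota_{\vec\gamma}$, namely it is $\iota_{\vec\gamma}([\alpha],1)$ with $\alpha$ given by the displayed formula. By injectivity this class $([\alpha],1)$ is unique.

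The main obstacle is keeping the conventions consistent. Three things must agree: the orientation of the boundary, the order in which the monodromy composes, and the fact (Rem. \ref{rem flat boundary mapping torus}) that $\Z$ acts by the inverse of the monodromy. These must match so that the relation check and the order $\gamma_1\cdots\gamma_k$ both come out right. I will verify them carefully against the single-critical-value case (\ref{eq canonical identification boundary EV with mapping torus.}) before carrying out the induction.
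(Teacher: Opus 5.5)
Your proposal is correct and follows essentially the same route as the paper: you check the single conjugation relation via $\rho([\gamma_1])\circ\cdots\circ\rho([\gamma_k])=\tau_*^{-1}$, use that $[\gamma_1]\cdots[\gamma_k]$ has infinite order in $F_k$ for injectivity, and expand $([\alpha_1],[\gamma_1])\bullet\cdots\bullet([\alpha_k],[\gamma_k])$ to identify it with $\iota_{\vec{\gamma}}([\alpha],1)$. The only differences are that you phrase injectivity via normal forms rather than the kernel, and you add a justification that $\iota_{\vec{\gamma}}$ is the map induced by the inclusion, which the paper takes for granted.
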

\begin{proof}
To prove well-definedness, the only relation to check is \[(0,1)([\alpha],0)(0,-1) = \big(\tau^{-1}_*([\alpha]),0\big).\] The map $\iota_{\vec{\gamma}}$ maps the LHS of the previous equation to
\[ \big(0,[\gamma_1] \cdots [\gamma_k]\big)\bullet\big([\alpha],0\big)\bullet\big(0,([\gamma_1] \cdots [\gamma_k])^{-1}\big)=\big({\tau_{V_1}^{-1}}_*\cdots \ {\tau_{V_k}^{-1}}_*([\alpha]),0\big)=\big(\tau_*^{-1}([\alpha]),0\big).\]
To show injectivity, every element of $\pi_1(\Sigma_{\tau},\ast_{\Sigma})$ can be written as $([\alpha],\,n)$ with $[\alpha]\in\pi_1(\Sigma,\ast_{\Sigma})$ and $n\in\mathbb{Z}$. Assume 
\[ \iota_{\vec{\gamma}}([\alpha],n)=([\ast_{\Sigma}],e),\]
where $e\in F_k$ is the trivial letter. By definition $[\alpha]=[\ast_{\Sigma}] \in\pi_1(\Sigma,\ast_{\Sigma})$ and $([\gamma_1] \cdots [\gamma_k])^n=e \in F_k$. Since the word $[\gamma_1] \cdots [\gamma_k]$ is clearly reduced in $F_k$, it has to have infinite order in it. Therefore $n=0$, proving injectiviness.\\
Let $\alpha:= \alpha_1\star \tau_{V_1}^{-1}\alpha_2\star \cdots \star (\tau_{V_{k-1}}\circ \cdots \circ \tau_{V_1})^{-1}\alpha_k$, given the sequence of sections $s_i$, by applying $\iota_{\vec{\gamma}}$ we have:
\begin{align*}
    \iota_{\vec{\gamma}}\big([s_1\star \cdots \star s_k]\big)&= \big([\alpha_1],[\gamma_1]\big)\bullet \big([\alpha_2],[\gamma_2]\big)\bullet \cdots \bullet \big([\alpha_k],[\gamma_k]\big)\\
    &=\big(\alpha_1\star \tau_{V_1}^{-1}\alpha_2\star \cdots \star (\tau_{V_{k-1}}\circ \cdots \circ \tau_{V_1})^{-1}\alpha_k,[\gamma_1] \cdots [\gamma_k]\big)\\
    &= \iota_{\vec{\gamma}}([\alpha],1)
\end{align*}
By injectiviness of $\iota_{\vec{\gamma}}$ we conclude.
\end{proof}
\subsubsection{Sections of Lefschetz fibrations over the disk with several critical values}\hfill\\
\begin{theorem}\label{theorem criterion existence section LF over disk with several thimbles}
Let  $\pi: E\to D$ be a Lefschetz fibration as above. Let $\Sigma_{\tau}$ be the mapping torus of $\tau \in \mathrm{Aut}(\Sigma)$ identified with $\restr{E}{\partial D}$. Let $([\alpha],1)\in \pi_1(\Sigma_{\tau},\ast_{\Sigma})$ and let $s_{\alpha}$ be a section representative for it.\\ There is a continuous {section} $\sigma_{\alpha}$ of $E$ extending $s_{\alpha}$ if and only if the following condition is met:\\
    Choose a compatible factorization of the monodromy $\tau$:
    \[\tau = \tau_{V_k}\circ \cdots \circ \tau_{V_1},\]
    then, as a based loop in $\Sigma$, $\alpha$ can be decomposed as
    \[
    \alpha \sim \alpha_1 \star \tau_{V_1}^{-1}\alpha_2 \star \cdots \star (\tau_{V_{k-1}}\circ \cdots \circ \tau_{V_1})^{-1}\alpha_k,
    \]
    where each $\alpha_i$ is ``twisted-conjugated’’ to some multiple of a based representative $\widetilde{V_i}$ of the $i$-th vanishing cycle $V_i$. I.e. there exist an integer $m_i$ and a based loop $\zeta_i$ in $\Sigma$ such that
    \[
    \alpha_i \sim \zeta_i \star \widetilde{V_i}^{m_i} \star \tau_{V_i}^{-1}(\overline{\zeta_i}).
    \]
    Moreover, the section $\sigma_{\alpha}$ is smoothable if and only if $\forall i \ m_i \in \{0,1\}$.
\end{theorem}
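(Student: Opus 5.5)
The plan is to reduce to the single-critical-value case, Theorem \ref{theorem smooth and continuous sections criteria for sections over the disk}, using the bouquet decomposition of Lemma \ref{lem section over D same as section over bounquet of disks} and the concatenation formula of Lemma \ref{lemma convenient identification of fundamental group mapping torus as subgroup of bouquet of mapping torii}. Fix the geometric basis $\vec{\gamma}$ coming from the chosen factorization, so that $\restr{E}{D_i}\cong E^{V_i}$ (Rem. \ref{rem sectorial structure and ordered k-uple of vanishing cycles}). By Lemma \ref{lem section over D same as section over bounquet of disks}, a continuous (resp. smooth) section of $E$ extending $s_\alpha$ is, up to homotopy through sections, the same as a section over $\bigvee_{\ast_D} D_i$. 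Its restriction to $\bigvee\gamma_i$ must be homotopic, through sections of $\restr{E}{\bigvee\gamma_i}$, to the image of $s_\alpha$ under the inclusion $\Sigma_\tau\hookrightarrow \restr{E}{\bigvee\gamma_i}$. This inclusion is realised by the homotopy $\partial D\simeq \gamma_1\star\cdots\star\gamma_k$ in $D\setminus\{z_i\}$, lifted by the HLP as in the proof of Theorem \ref{thm existence of section fiber bundle if monomorphism plus split exact sequence}.

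For ($\Leftarrow$), suppose $\alpha$ admits the stated decomposition. Apply Theorem \ref{thm existence section LF over disk with single thimble} to each $(E^{V_i},D_i)$ with the data $(m_i,\zeta_i)$. This gives continuous sections $\sigma_i$ whose boundary restrictions $s_i$ represent $([\alpha_i],1)\in\pi_1(\Sigma_{\tau_{V_i}},\ast_\Sigma)$, and after a homotopy through sections near $\partial D_i$ we may assume they all pass through $\ast_\Sigma$ over $\ast_D$. They therefore glue to a section over the bouquet. By Lemma \ref{lemma convenient identification of fundamental group mapping torus as subgroup of bouquet of mapping torii}, the concatenation $s_1\star\cdots\star s_k$ represents $\iota_{\vec\gamma}([\alpha],1)$. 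Extend over $D\setminus\bigvee D_i$, which deformation retracts onto the bouquet, using the trivialization there together with a homotopy through sections from $s_1\star\cdots\star s_k$ to $s_\alpha$. This last step is exactly where care is needed; see below. When all $m_i\in\{0,1\}$, the $\sigma_i$ can be taken smooth by the smooth half of Theorem \ref{theorem smooth and continuous sections criteria for sections over the disk}, and the gluing can be smoothed relative to the boundary.

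For ($\Rightarrow$), given $\sigma_\alpha$, restrict it to each $D_i$. The boundary loop $\restr{\sigma_\alpha}{\gamma_i}$ is a section of $\Sigma_{\tau_{V_i}}$ based at the parallel-transported base point, and it represents some $([\alpha_i],1)$. Theorem \ref{theorem smooth and continuous sections criteria for sections over the disk} then yields $m_i$ and $\zeta_i$ with $\alpha_i\sim\zeta_i\star\widetilde{V_i}^{m_i}\star\tau_{V_i}^{-1}(\overline{\zeta_i})$, and $m_i\in\{0,1\}$ if $\sigma_\alpha$ is smooth. The section over the contractible region $D\setminus\bigvee D_i$ gives a homotopy through sections between $s_\alpha$ and $s_1\star\cdots\star s_k$. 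Hence in $\pi_1(\restr{E}{\bigvee\gamma_i})$ we get $\iota_{\vec\gamma}([\alpha],1)=([\alpha_1],[\gamma_1])\bullet\cdots\bullet([\alpha_k],[\gamma_k])$, and injectivity of $\iota_{\vec\gamma}$ gives the decomposition of $\alpha$.

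The main obstacle is the bookkeeping of base points and of ``homotopy through sections'' versus free or based homotopy in both directions. A section homotopy class is a based class only up to the twisted conjugation of Lemma \ref{lem section homotopic through sections iff conjugate by fiber element}, so a fibre-direction conjugation could in principle be absorbed into the $\alpha_i$. My first move is to normalise every section to pass through $\ast_\Sigma$ over $\ast_D$, using the HLP on a collar. Any fibre loop introduced in this way at the wedge point conjugates all the $\alpha_i$ compatibly, and this conjugation can be absorbed into the $\zeta_i$ via Lemma \ref{lem section homotopic through sections iff conjugate by fiber element}. The statement should therefore be read as holding up to twisted conjugation of $\alpha$, which is consistent with $s_\alpha$ being only a section representative. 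Once this is checked, both directions follow from the single-thimble theorem and the injectivity of $\iota_{\vec\gamma}$.
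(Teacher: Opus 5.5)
Your proposal is correct and follows essentially the same route as the paper: reduce via the bouquet Lemma to the single-thimble Theorems on each $D_i$, then compare the two sides of $\iota_{\vec\gamma}([\alpha],1)=([\alpha_1],[\gamma_1])\bullet\cdots\bullet([\alpha_k],[\gamma_k])$ in $\pi_1(\Sigma,\ast_\Sigma)\rtimes_\rho F_k$. Your extra basepoint bookkeeping is a correct refinement that the paper leaves implicit, but the statement needs no reinterpretation: a twisted conjugation $\alpha\mapsto\zeta\star\alpha\star\tau^{-1}(\overline{\zeta})$ simply replaces each $\zeta_i$ by $\zeta\star\zeta_i$, so the condition is already invariant.
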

\begin{proof}
A choice of a positive factorization
    \[\tau=\tau_{V_k}\circ \cdots \circ\tau_{V_1}\] is equivalent to a choice of vanishing paths and hence of a geometric basis $(\gamma_1,\dots,\gamma_k)$ for $D\setminus \{z_1,\dots,z_k\}$.
\begin{itemize}
    \item[$(\Leftarrow)$] Let $([\alpha],1)\in \pi_1(\Sigma,\ast_{\Sigma})\rtimes_{\tau_*} \mathbb{Z}$ satisfy 
    \[
    \alpha \sim \alpha_1 \star \tau_{V_1}^{-1}\alpha_2 \star \cdots \star (\tau_{V_{k-1}}\circ \cdots \circ \tau_{V_1})^{-1}\alpha_k,
    \]
    where each $\alpha_i$ satisfies the respective ``twisted conjugacy'' condition in the statement. Then by using $\iota_{\vec{\gamma}}$ we have 
                        \[\iota_{\vec{\gamma}}([\alpha],1)=([\alpha_1],[\gamma_1])\bullet ([\alpha_2],[\gamma_2])\bullet \cdots \bullet ([\alpha_k],[\gamma_k]).\]
    By Lemma \ref{lem section over D same as section over bounquet of disks}, to construct a section $\sigma_{\alpha}\in \Gamma_0(E)$ it is enough to define sections $\sigma_i\in \Gamma_0(\restr{E}{D_i})$ for each index $i$. Assuming
    \[[\restr{\sigma_i}{\partial D_i}]= ([\alpha_i],1)\in \pi_1(\Sigma_{\tau_{V_i}},\ast_{\Sigma}), \]
    then Lemma \ref{lemma convenient identification of fundamental group mapping torus as subgroup of bouquet of mapping torii} will ensure that their concatenation represents the correct homotopy class $([\alpha],1)\in \pi_1(\Sigma_{\tau},\ast_{\Sigma})$.
    The conditions required by Theorem \ref{thm existence section LF over disk with single thimble} to guarantee existence of such sections representing the correct homotopy classes are sa\-tis\-fied by assumption.\\
    If $m_i=0$ or $1$, we can apply the smooth characterisation from Theorem \ref{theorem smooth and continuous sections criteria for sections over the disk} and Lemma \ref{lem section over D same as section over bounquet of disks}.
    \item[$(\Rightarrow)$] Let $(\gamma_1,\dots,\gamma_k)$ be a geometric basis for $D\setminus \{z_1,\dots,z_k\}$. Let $\sigma_{\alpha} \in \Gamma_0(E)$ be a section whose restriction to $\partial D$ represents $([\alpha],1)\in \pi_1(\Sigma_{\tau},\ast_{\Sigma})$. By Lemma \ref{lem section over D same as section over bounquet of disks}, $\sigma_{\alpha}$ induces a family of sections $\{\sigma_i\in \Gamma_0(\restr{E}{D_i})\}$. Each $\sigma_i$ defines a based loop $s_i: S^1\to \restr{E}{\gamma_i}$ representing $([\alpha_i],1)\in \pi_1(\Sigma_{\tau_{V_i}},\ast_{\Sigma})$ for some loop $\alpha_i$ in $\Sigma$. We claim that such $\alpha_i$ satisfies the twisted conjugacy property.\\
    By Theorem \ref{theorem smooth and continuous sections criteria for sections over the disk} applied to $\sigma_i$, there is an integer $m_i$ and a based loop $\zeta_i$ in $\Sigma$ such that in $\pi_1(\Sigma,\ast_{\Sigma})$,
         \[[\alpha_i]=[\zeta_i]\cdot[\widetilde{V_i}^{m_i}]\cdot {\tau_{V_i}}^{-1}_*([\overline{\zeta_i}]).\]
    By Lemma \ref{lemma convenient identification of fundamental group mapping torus as subgroup of bouquet of mapping torii}, the homotopy class
    \[ [s_{\alpha}]=([\alpha],1)\]
    can be decomposed as 
    \[([\alpha_1],[\gamma_1])\bullet ([\alpha_2],[\gamma_2])\bullet \cdots \bullet ([\alpha_k],[\gamma_k]),\]
    showing that in $\pi_1(\Sigma,\ast_{\Sigma})$,
    \[ \alpha\sim \alpha_1\star \tau_{V_1}^{-1}\alpha_2\star \cdots \star (\tau_{V_{k-1}}\circ \cdots \circ \tau_{V_1})^{-1}\alpha_k.\]
    If $\sigma_{\alpha} \in \Gamma(E)$, i.e. it is smooth, then each $\sigma_i$ will be smooth and hence Theorem \ref{theorem smooth and continuous sections criteria for sections over the disk} will restrict $m_i$ to be either $0$ or $1$, concluding the proof.
  \end{itemize}
\end{proof}
\begin{rem}\label{rem theorem independent on choice of vanishing paths}
    Let $E\to D$ be Lefschetz fibration with fiber $\Sigma$ and monodromy $\tau\in \mathrm{Aut}(\Sigma)$. The property of a section $s_{\alpha}\in \Gamma_0(\restr{E}{\partial D})$ to be \textit{extendable} over $D$ only depends on $[\alpha]$ (and $\pi: E\to D$), not on the specific choice of positive factorisation of $\tau$. We can see that explicitly by considering the following example for $k=2$ that also serves as a sketch of proof for the invariance of Theorem \ref{theorem criterion existence section LF over disk with several thimbles} under Hurwitz moves.\\
   Choose a geometric basis for $D\setminus \{z_1,z_2\}$  such that $\tau=\tau_{V_2}\tau_{V_1}$ for some embedded curves $V_1,V_2$ in the regular fiber $\pi^{-1}(\ast_D)\cong \Sigma$. By applying the (first) Hurwitz move, we obtain an equivalent decomposition of the monodromy as
    \[ \tau=\tau_{V_2}\tau_{V_1}\tau_{V_2}^{-1}\tau_{V_2}=\tau_{\tau_{V_2}(V_1)}\tau_{V_2}\]
    and associated vanishing cycles $(V_1',V_2')=(V_2,\tau_{V_2}(V_1))$. By Theorem \ref{theorem criterion existence section LF over disk with several thimbles}, up to twisted conjugation, the only elements in $\pi_1(\Sigma_{\tau},\ast_\Sigma)$ arising as boundaries of sections are of the form
    \[ ({\widetilde{V'}_1}^{m_1},[{\gamma_1}'])\bullet ({\widetilde{V'}_2}^{m_2},[{\gamma_2}']).\] Since $[\gamma_1']=[\gamma_2]$, direct substitution shows
    \begin{align*}
        ([\widetilde{V}_1']^{m_1},[{\gamma_1}'])\bullet ([\widetilde{V}_2']^{m_2},[{\gamma_2}']) &=([\widetilde{V}_2]^{m_1},[{\gamma_2}])\bullet ([\tau_{V_2}(V_1)]^{m_2},[{\gamma_2}'])\\
        &=([\widetilde{V}_2]^{m_1}\cdot {\tau_{V_2}}^{-1}_*[\tau_{V_2}(V_1)]^{m_2},[{\gamma_2}][{\gamma_2}'])\\
        &=([\widetilde{V}_2^{m_1}\star \widetilde{V}_1^{m_2}],1)
    \end{align*}
    With respect to the geometric basis $(\gamma_1,\gamma_2)$, the loop $\alpha :=\widetilde{V}_2^{m_1}\star \widetilde{V}_1^{m_2}$ can be decomposed as $\alpha_1\star \tau_{V_1}^{-1}\alpha_2$ as follows
    \begin{align*}
        \alpha_1&:= \widetilde{V}_2^{m_1}\star \widetilde{V}_1^{m_2}\star \tau_{V_1}^{-1}(\widetilde{V}_2^{-m_1})\\
        \alpha_2 &:= \widetilde{V}_2^{m_1}
    \end{align*}
    and each $\alpha_i$ clearly satisfied the existence criterion (Theorem \ref{theorem criterion existence section LF over disk with several thimbles}) for a section $\sigma_i$ w.r.t. the original geometric basis. This shows that the property that the homotopy class $([\widetilde{V}_2^{m_1}\star \widetilde{V}_1^{m_2}],1)$ can be represented as the boundary of a section $\sigma\in \Gamma_0(E)$ is intrinsic to the loop and $E$, rather than the specific choice of vanishing paths.
\end{rem}

\subsection{Homologically distinct sections of some achiral Lefschetz fibrations over the sphere}\hfill\\

We give here a criterion for the existence of homologically distinct sections of the \textit{achiral} Lefschetz fibration $D(\pi):D(E)\to S^2$ arising as the double of a given Lefschetz fibration over the disk $\pi: E\to D$ with closed fibers and monodromy $\tau$. Recall that \[D(E) =E \bigsqcup_{\partial E} \overline{E},\] where the gluing takes place along the boundary using the identity map. The second copy inherits the opposite orientation hence the result is an achiral Lefschetz fibration.\\ 
Choose a factorization $\tau=\tau_{V_k}\circ \cdots \circ \tau_{V_1}$, let $\mathbf{m}=(m_1,\dots,m_k)\in \Z^k$. We define the based loop $V^{\textbf{m}}$ in $\Sigma$ as follows:
\[ V^{\textbf{m}}:= \widetilde{V}^{m_1}_1\star \tau_{V_1}^{-1}\widetilde{V}^{m_2}_2\star \cdots \star (\tau_{V_{k-1}}\circ \cdots \circ \tau_{V_1})^{-1}\widetilde{V}^{m_k}_k.\]
It is immediate that $([V^{\mathbf{m}}],1)$ satisfies the assumption of Theorem \ref{theorem criterion existence section LF over disk with several thimbles} and hence there exists a section $\sigma_{\textbf{m}}\in \Gamma(E)$ whose restriction is a representative for $([V^{\mathbf{m}}],1)$.\\
In the following, $[\cdot ]$ will denote an equivalence class in homology, rather than in homotopy. Since there will be no further mentions to homotopy classes this should not cause any confusion.
\begin{theorem}\label{theorem sections of doubles of LF over disk}
For any section $\sigma_{\textbf{m}}\in \Gamma_0(E)$ constructed as above, its double $D(\sigma_{\textbf{m}})$ is a section of $D(E)$. If there exists an index $j\in \{1,\dots,k\}$ and an integer $m$  such that \[m\cdot [V_j] \notin \mathrm{Im}(\Id_\ast-\tau_*)\subset H_1(\Sigma;\Z),\] then $D(E)$ admits at least two homologically distinct sections.
\end{theorem}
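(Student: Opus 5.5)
The plan has two parts: show that doubling produces sections, then tell two of them apart by a homological invariant that can be computed from the boundary loop. For the first part, observe that $D(E)=E\cup_{\Sigma_\tau}\overline{E}$ projects to $S^2=D\cup_{\partial D}\overline{D}$, and that $\sigma_{\mathbf m}$ and its mirror copy agree on $\partial D$, since both restrict to the same section $s_{\mathbf m}$ of $\Sigma_\tau$. Gluing them gives a continuous section $D(\sigma_{\mathbf m})$. Continuity across $\partial D$ is automatic. If $\sigma_{\mathbf m}$ is smooth, one makes it horizontal (flat) on a collar of $\partial D$ using the flatness from \cite[Rem 1.4]{Sei03}, and the doubled section is then smooth.

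For the second part, compare $D(\sigma_{\mathbf 0})$ with $D(\sigma_{\mathbf m'})$, where $\mathbf m'=(0,\dots,0,m,0,\dots,0)$ has $m$ in the $j$-th slot; both are smoothable by Theorem \ref{theorem criterion existence section LF over disk with several thimbles} when $m\in\{0,1\}$, and in general at least continuous. The difference class $[D(\sigma_{\mathbf m'})]-[D(\sigma_{\mathbf 0})]\in H_2(D(E);\Z)$ should be detected by Mayer--Vietoris for $D(E)=E\cup\overline E$ along $\Sigma_\tau$. The boundary loops $s_{\mathbf m'}$ and $s_{\mathbf 0}$ are sections of $\Sigma_\tau$ differing by the fibre class $[V^{\mathbf m'}]=m\,(\tau_{V_{j-1}}\circ\cdots\circ\tau_{V_1})^{-1}_*[V_j]$ in $H_1(\Sigma)$. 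In the Wang sequence
\[
H_1(\Sigma)\xrightarrow{\Id_*-\tau_*}H_1(\Sigma)\to H_1(\Sigma_\tau)\to H_0(\Sigma)\to 0,
\]
two sections give distinct classes in $H_1(\Sigma_\tau)$ exactly when their fibre difference is not in $\mathrm{Im}(\Id_*-\tau_*)$. The prefactor $(\tau_{V_{j-1}}\circ\cdots\circ\tau_{V_1})^{-1}_*$ should not matter, because each $\tau_{V_i *}$ acts on $H_1(\Sigma)$ by the Picard--Lefschetz transvection $x\mapsto x+\langle x,V_i\rangle V_i$, so $\phi_*[V_j]-[V_j]$ lies in the span of $[V_1],\dots,[V_{j-1}]$. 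I still have to check that this span is absorbed by the image, or else pick $j$ minimal. This bookkeeping is the step I expect to be fiddly. If it becomes unmanageable, I would instead reorder by Hurwitz moves (Remark \ref{rem theorem independent on choice of vanishing paths}) so that $V_j$ comes first.

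The main obstacle is turning a difference in $H_1(\Sigma_\tau)$ into a difference in $H_2(D(E))$. The tool is the boundary map $\partial:H_2(D(E))\to H_1(\Sigma_\tau)$ of Mayer--Vietoris. A section $D(\sigma)$ is the union of two chains $\sigma$ and $\overline{\sigma}$, and $\partial[D(\sigma)]=[s]$, the boundary loop. Hence if $[s_{\mathbf m'}]\neq[s_{\mathbf 0}]$ in $H_1(\Sigma_\tau)$, the classes $[D(\sigma_{\mathbf m'})]$ and $[D(\sigma_{\mathbf 0})]$ differ. I would verify $\partial[D(\sigma)]=[s]$ directly from the construction of the connecting map, which cuts a cycle along the gluing hypersurface. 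This sidesteps having to understand the images of $H_1(\Sigma_\tau)\to H_1(E)$.
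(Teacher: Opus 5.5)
Your proposal is correct and follows the paper's argument: the Mayer--Vietoris connecting map $\delta\colon H_2(D(E);\Z)\to H_1(\Sigma_\tau;\Z)$ sends $[D(\sigma)]$ to the class of the boundary loop, and one compares $D(\sigma_{\mathbf 0})$ with $D(\sigma_{m e_j})$. The paper removes the prefactor $(\tau_{V_{j-1}}\circ\cdots\circ\tau_{V_1})^{-1}_*$ by exactly your fallback of Hurwitz moves putting $V_j$ first; your minimal-$j$ option also works, since minimality (applied with $m=1$) puts $[V_1],\dots,[V_{j-1}]$ in $\mathrm{Im}(\Id_*-\tau_*)$ and this absorbs the Picard--Lefschetz corrections, and comparing differences of boundary classes is more careful than the paper's claim $\delta(D(\sigma_{\mathbf 0}))=0$, which ignores the $\Z$ summand of $H_1(\Sigma_\tau)$ coming from the base.
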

\begin{proof}
 The fact that a section $\sigma$ of $E$ gives rise to a section $D(\sigma)$ of $D(E)$ is immediate.\\
Consider the boundary map arising from the M-V sequence applied to the decomposition $D(E) = E \bigsqcup_{\partial E} \overline{E}$
        \[ \delta: H_2(D(E);\Z)\to H_1(\Sigma_{\tau};\Z)\cong \bigslant{H_1(\Sigma;\Z)}{\text{Im}(\Id_*-\tau_*)};\]
        two sections $s_1, s_2$ of $D(E)$ are homologically distinct if 
        \[ \delta([s_1])\neq \delta([s_2]).\]
The first section we consider is $D(\sigma_{\textbf{0}})$, i.e. the section associated with all $m_i=0$. Up to Hurwitz moves, we can assume that $j=1$ in the assumption of the Theorem; there is an integer $m$  such that \[m\cdot [V_1] \notin \mathrm{Im}(\Id_\ast-\tau_*).\] Consider the section $D(\sigma_{m\cdot{e_1}})$. The boundary map $\delta$ sends $D(\sigma_{\textbf{0}})$ to $0$, while it is easy to see via the Picard Lefschetz formula that $\delta\big(D(\sigma_{m\cdot{e_1}})\big)=m\cdot[V_1]\neq 0$ as an element of $H_1(\Sigma_{\tau};\Z)$, hence the two sections are homologically distinct.
\end{proof}
In the case of $\tau\sim \mathrm{Id}\in \mathrm{Aut}(\Sigma)$, then $0=\text{Im}(\Id_*-\tau_*)$ and by results of I. Smith (\cite{Smi99}) there must be at least one homologically essential vanishing cycle among the $V_i$'s. Hence we have the following corollary:
\begin{cor}\label{cor on theorem section doubles}
    Let $E\to D$ as above. If $\tau\sim \id$ in $\text{Aut}(\Sigma)$, then $D(E)$ admits countably many homologically distinct sections.
\end{cor}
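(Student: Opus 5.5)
The plan is to reduce the corollary to Theorem \ref{theorem sections of doubles of LF over disk}, and then to upgrade ``two'' to ``countably many'' by running the homological distinction over all integer multiples at once.

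First, suppose $\tau\sim\Id$ in $\mathrm{Aut}(\Sigma)$. Then $\tau_*=\Id_*$ on $H_1(\Sigma;\Z)$, so $\mathrm{Im}(\Id_*-\tau_*)=0$. The Mayer--Vietoris target of $\delta$ is then all of $H_1(\Sigma_\tau;\Z)$, and by the identification used in the proof of the theorem this is
\[H_1(\Sigma;\Z)/\mathrm{Im}(\Id_*-\tau_*)\cong H_1(\Sigma;\Z).\]
Next, I invoke Smith's result \cite{Smi99}: a nontrivial Lefschetz fibration whose monodromy factorization multiplies to the identity has at least one vanishing cycle $V_j$ that is homologically essential, i.e.\ $[V_j]\neq 0$ in $H_1(\Sigma;\Z)$. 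Applying the Hurwitz-move argument of Remark \ref{rem theorem independent on choice of vanishing paths}, as in the proof of the theorem, I may assume $j=1$.

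Since $H_1(\Sigma;\Z)$ is free abelian and $[V_1]\neq 0$, we have $m[V_1]\neq 0$ for every $m\neq 0$. So the hypothesis of Theorem \ref{theorem sections of doubles of LF over disk} holds, and in fact it holds for every nonzero $m$. For each $m\in\Z$, consider the section $\sigma_{m e_1}$ of $E$ built from Theorem \ref{theorem criterion existence section LF over disk with several thimbles}, and its double $D(\sigma_{me_1})$. The computation in the proof of the theorem, via the Picard--Lefschetz formula, gives
\[\delta\big(D(\sigma_{me_1})\big)=m[V_1]\in H_1(\Sigma;\Z).\]
These values are pairwise distinct, because $m[V_1]=m'[V_1]$ forces $(m-m')[V_1]=0$, hence $m=m'$ by torsion-freeness. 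Since $\delta$ is a homomorphism, distinct images mean the classes $[D(\sigma_{me_1})]\in H_2(D(E);\Z)$ are pairwise distinct. This yields a family of sections indexed by $\Z$, which is countably infinite.

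The step I expect to be the main obstacle is justifying $\delta(D(\sigma_{me_1}))=m[V_1]$ for \emph{all} $m$, and not only for the one value the theorem's proof needs. My approach is to write $\delta$ of a doubled section as the class of the boundary loop $\sigma_{me_1}|_{\partial D}$, viewed in $H_1(\Sigma_\tau)$ relative to the reference section $\sigma_{\mathbf 0}$. That boundary loop represents $([\widetilde V_1^{m}],1)$, so its fibre component in homology is $m[\widetilde V_1]=m[V_1]$, which is linear in $m$.

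For $|m|\ge 2$ the sections $\sigma_{me_1}$ are only continuous (Theorem \ref{theorem criterion existence section LF over disk with several thimbles}), so they pass through the critical point. The homological computation is unaffected, since it only uses the class of $D(\sigma_{me_1})$ in $H_2(D(E);\Z)$, which makes sense for continuous maps. Thus the corollary is read for continuous sections; if smooth ones are wanted, one could combine the smooth choices $m_i\in\{0,1\}$ over many vanishing cycles, but that would only give finitely many classes.
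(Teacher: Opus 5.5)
Your proof is correct and is essentially the paper's argument: Smith's result supplies an essential vanishing cycle, $\mathrm{Im}(\Id_*-\tau_*)=0$, and the doubles $D(\sigma_{me_1})$, $m\in\Z$, are separated by the Mayer--Vietoris boundary $\delta$. You add useful care by computing $\delta(D(\sigma_{me_1}))-\delta(D(\sigma_{\mathbf{0}}))=m[V_1]$ through abelianizing $([\widetilde{V}_1^{m}],1)$, which also accounts for the $\Z$-summand of $H_1(\Sigma_\tau;\Z)$ that the paper's identification drops.

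Your closing remark about smooth sections is too pessimistic. Keep $m_1=0$ but take $\zeta_1=c^n$ with $\langle c,V_1\rangle\neq 0$ in Theorem \ref{theorem criterion existence section LF over disk with several thimbles}. This gives smoothable sections whose boundary fibre class is $[\zeta_1]-(\tau_{V_1})_*^{-1}[\zeta_1]=\pm n\langle c,V_1\rangle[V_1]$ by Picard--Lefschetz, so infinitely many homology classes are already realized by smooth sections.
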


\newpage
\bibliographystyle{alpha}
\bibliography{bibliography.bib}
\end{document}